\documentclass{amsart}
\usepackage[english,french]{babel}
\usepackage[utf8]{inputenc}
\usepackage[dvips,final]{graphics}
\usepackage{amsmath,amsfonts,amssymb,amsthm,amscd,array,stmaryrd,mathrsfs, mathdots, epigraph}
\usepackage{arydshln}
\usepackage[makeroom]{cancel}
\usepackage{pstricks}
 \usepackage[all]{xy}
 \usepackage{url}
\usepackage{multirow, blkarray}
\usepackage{booktabs}
\usepackage{textcomp}
 \usepackage[final]{epsfig}
 \usepackage{color}
\theoremstyle{plain}
\newtheorem{thm}{Theorem}[section]

\newtheorem{lem}[thm]{Lemma}
\newtheorem{cor}[thm]{Corollary}
\newtheorem{prop}[thm]{Proposition}

\theoremstyle{definition}
\newtheorem*{rem}{Remark}

\begin{document}

\title[Product of two matrices similar to companion matrices]{Product of two matrices similar to companion matrices over sufficiently large fields}
\date{}
\author{Flavien Mabilat}
\subjclass[2020]{15A23}
\email{flavien.mabilat@univ-reims.fr}

\maketitle

\selectlanguage{french}
\begin{abstract}

Dans cette note, on va démontrer qu'une matrice carrée de taille $n$ sur un corps commutatif contenant au moins $2n$ éléments peut s'exprimer comme un produit de deux matrices semblables à des matrices compagnons, c'est-à-dire des matrices ayant le même polynôme minimal et caractéristique, si et seulement si le rang de $A$ est supérieur à $n-2$, en utilisant uniquement des résultats classiques. On donnera également quelques éléments valables sur des corps plus petits.
\\
\end{abstract}

\selectlanguage{english}

\begin{abstract}

In this note, we prove that a square matrix of size $n$ over a field containing at least $2n$ elements can be expressed as the product of two matrices similar to companion matrices, that is to say matrices with the same minimal and characteristic polynomial, if and only if the rank of $A$ is greater than $n-2$, using only classical facts. We will also give some partial results valid over smaller fields.
\\
\end{abstract}
\thispagestyle{empty}

\noindent \textbf{\underline{Keywords :}} companion matrices, minimal polynomial, characteristic polynomial

\section{Introduction}

In this note, all fields considered are commutative. Let $\mathbb{K}$ be an arbitrary field. $0_{k,l}$ denotes the zero matrix of $M_{k,l}(\mathbb{K})$ and $I_{n}$ the identity matrix of size $n$. Let $A \in M_{n}(\mathbb{K})$. We denote $\chi_{A}(X):={\rm det}(XI_{n}-A)$ the characteristic polynomial of $A$ (with this definition $\chi_{A}(X)$ is a monic polynomial), $\pi_{A}$ the minimal polynomial of $A$, assumed to be monic, and ${\rm Sp}(A)$ the spectrum of $A$, that is to say the set of the eigenvalues of $A$ over the field $\mathbb{K}$. The rank of $A$ is written ${\rm rg}(A)$. Let $B \in M_{k,l}(\mathbb{K})$, ${}^t B$ is the transpose of $B$. Let $\Omega$ be a finite set, and let $\left|\Omega\right|$ denote its cardinality. Let $n \geq 1$ and $P(X):=X^{n}+\sum_{i=0}^{n-1} a_{i}X^{i} \in \mathbb{K}[X]$. The companion matrix of $P$ is the matrix $C(P)$ defined as follows:
$C(P):=\begin{pmatrix}
   0 & 0 & \ldots & 0 & -a_{0}  \\
      1 & 0 & \ldots & 0 & -a_{1}   \\
		  0 & \ddots & \ddots & \vdots & \vdots  \\
		  \vdots & \ddots & \ddots & 0 & -a_{n-2}  \\
		  0 & \ldots & 0 & 1 &  -a_{n-1} \\
   \end{pmatrix}$.
\\
\\
\\\indent These matrices play an important role in the study of square matrices since they are the master pieces of the well-known Frobenius normal form. They are also a useful tool to study polynomials since $P$ is the characteristic polynomial of $C(P)$. Hence, they are matrices of particular interest. This naturally leads us to consider products of companion matrices. We know several results in this direction. For instance, we have an explicit combinatorial formula for the coefficients of product of companion matrices (see \cite{LD}) and several results concerning the eigenvalues of such products (see \cite{KV}). Note that we also have several results about the decomposition of companion matrices (see \cite{F}). 
\\
\\\indent Considering all these elements, a question arises naturally: Can we decompose a square matrix as a product of two companion matrices ? In fact, the particular form of these matrices is a little too restrictive to have a sufficiently large class of matrices satisfying the desired property. For instance, we have the following equality:
\[\begin{pmatrix}
   0 & 0 & a \\
    1 & 0 & b  \\
		0 & 1 & c \\
   \end{pmatrix}\begin{pmatrix}
   0 & 0 & e \\
    1 & 0 & f  \\
		0 & 1 & g \\
   \end{pmatrix}=\begin{pmatrix}
   0 & a & ag \\
    0 & b & e+bg  \\
		1 & c & f+cg \\
   \end{pmatrix}.\]
	
However, we can consider a related but less restrictive problem: Can we decompose a square matrix as a product of two matrices similar to companion matrices, that is to say matrices whose minimal and characteristic polynomials are identical ? Surprisingly, it seems that there is no answer to this natural question. In this note, we will study this interrogation and prove the following result:

\begin{thm}
\label{11}

Let $n$ be a positive integer and $\mathbb{K}$ be a commutative field verifying $\left|\mathbb{K}\right| \geq 2n$. A matrix $A \in M_{n}(\mathbb{K})$ can be written as a product of two matrices similar to companion matrices if and only if ${\rm rg}(A) \geq n-2$.

\end{thm}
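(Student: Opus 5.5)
The necessity part of Theorem~\ref{11} is elementary. Call a matrix \emph{cyclic} if it is similar to a companion matrix. For $\lambda\in\mathbb{K}$, the kernel of $C(P)-\lambda I_{n}$ has dimension at most $1$, because deleting the first row and the last column of $C(P)-\lambda I_{n}$ leaves a unit lower triangular block. Hence every cyclic matrix has rank at least $n-1$. If $A=BC$ with $B,C$ cyclic, Sylvester's inequality gives ${\rm rg}(A)\geq {\rm rg}(B)+{\rm rg}(C)-n\geq n-2$.

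For sufficiency, first note that the property is invariant under similarity: $PAP^{-1}=(PBP^{-1})(PCP^{-1})$. So we may replace $A$ by any convenient conjugate. The tool is that a triangular matrix with pairwise distinct diagonal entries has $n$ distinct eigenvalues, hence $\pi=\chi$, hence is cyclic. The aim is therefore to write a conjugate of $A$ as $A'=LU$, with $L$ lower triangular, $U$ upper triangular, and each having pairwise distinct diagonal entries.

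\emph{Invertible, non-scalar case.} I would use the classical fact that a non-scalar matrix is similar to a matrix all of whose leading principal minors are nonzero. This can be shown by building a suitable flag of subspaces, or by an induction using a cyclic vector of a non-scalar block. Such an $A'$ has an $LU$ decomposition $A'=L_{0}U_{0}$ with $L_{0}$ unit lower triangular and $U_{0}$ upper triangular with nonzero diagonal $u_{1},\dots,u_{n}$. Write $A'=(L_{0}D)(D^{-1}U_{0})$ with $D={\rm diag}(d_{1},\dots,d_{n})$. We choose the $d_{i}\in\mathbb{K}^{*}$ one at a time so that $d_{i}$ differs from $d_{1},\dots,d_{i-1}$ and $u_{i}/d_{i}$ differs from $u_{1}/d_{1},\dots,u_{i-1}/d_{i-1}$. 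This forbids at most $2(i-1)$ values of $\mathbb{K}^{*}$. Since $|\mathbb{K}^{*}|\geq 2n-1>2(n-1)$, this is always possible, which is exactly where the hypothesis $|\mathbb{K}|\geq 2n$ enters.

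\emph{Scalar case.} If $A=\lambda I_{n}$ with $\lambda\neq 0$, write $A=(\lambda C)C^{-1}$ with $C$ diagonal with distinct nonzero entries. Such a $C$ exists since $|\mathbb{K}^{*}|\geq n$, and both factors have distinct eigenvalues. The case $A=0$ arises only for $n\leq 2$ and is handled by hand, for instance $0=E_{12}E_{12}$ when $n=2$.

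\emph{Singular case, ${\rm rg}(A)\in\{n-1,n-2\}$.} This is the main obstacle. Now we need $L$ and $U$ to each have exactly one zero diagonal entry, with the remaining entries nonzero and distinct, and $0$ counting as one of the distinct eigenvalues. I would first try to conjugate $A$ into the block form
\[
A'=\begin{pmatrix} A_{1} & * \\ * & * \end{pmatrix},
\]
where $A_{1}$ is an invertible leading $(n-1)\times(n-1)$ or $(n-2)\times(n-2)$ block with nonzero leading principal minors. This should come from choosing a basis adapted to ${\rm Im}(A)$ and $\ker(A)$ and applying the non-scalar lemma to a compression. The remaining one or two steps of Gaussian elimination would then place a zero on the diagonal of $U$ for rank $n-1$, and one zero on the diagonal of $L$ and one on the diagonal of $U$ for rank $n-2$. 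After that, the same greedy choice of $D$ applies, now avoiding at most $2(n-1)$ values among the nonzero pivots. The delicate point is ensuring that the elimination does not force additional zero pivots. This may require a separate treatment when the relevant compressed block is scalar, mirroring the scalar case above.
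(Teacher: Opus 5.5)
\textbf{Where the proposal stands.} Your necessity argument is correct and shorter than the paper's computation: a cyclic matrix has rank at least $n-1$, and Sylvester's inequality does the rest. Your invertible case is also sound. It is essentially Lemma~\ref{27} unrolled: once that induction is unwound, the paper's factors $F,G$ are lower and upper triangular in a common basis, and its choice of $\mu$ is exactly your distinct-diagonal condition. The fact you quote should be stated for \emph{invertible} matrices, since for singular ones the last leading minor is $\det A=0$. Non-scalarity is not needed there, so your separate scalar case is superfluous.

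\textbf{The gap.} The singular case ${\rm rg}(A)\in\{n-1,n-2\}$ is only outlined, and the outlined mechanism fails as stated. Gaussian elimination yields a \emph{unit} lower triangular factor, and rescaling by an invertible $D$ cannot put a zero on its diagonal. Moreover, if the leading $(n-2)\times(n-2)$ block is invertible and ${\rm rg}(A)=n-2$, the Schur complement is $0_{2,2}$. Elimination then produces an upper factor with two zero pivots, hence a $2$-dimensional kernel, so that factor is not cyclic by Proposition~\ref{24}. The existence of a conjugate with an invertible leading block is also only asserted. The paper avoids all of this by applying the Sourour--Tang theorem with both spectra equal to $\{0,\lambda_2,\dots,\lambda_n\}$.

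\textbf{How to close it in your framework.} Let $r={\rm rg}(A)$, let $H$ be any complement of $\ker A$, and let $W$ be a common complement of $H$ and ${\rm Im}(A)$. Two subspaces of the same dimension have one over any field.
\begin{enumerate}
\item In a basis adapted to $H\oplus W$, the leading $r\times r$ block $A_{11}$ is invertible. Indeed, if $v\in H$ and $Av\in W$, then $Av\in W\cap{\rm Im}(A)=\{0\}$, so $v\in H\cap\ker A=\{0\}$.
\item Conjugating inside $H$, you may write $A_{11}=L_{11}U_{11}$ with $L_{11}$ lower and $U_{11}$ upper triangular, both invertible. Call the diagonal of $U_{11}$ $u_1,\dots,u_r$.
\item Since ${\rm rg}(A)=r$, the Schur complement $A_{22}-A_{21}A_{11}^{-1}A_{12}$ vanishes. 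Hence
\[
A'=\begin{pmatrix} L_{11} & 0 \\ A_{21}U_{11}^{-1} & T \end{pmatrix}\begin{pmatrix} U_{11} & L_{11}^{-1}A_{12} \\ 0 & T' \end{pmatrix}
\]
for \emph{any} lower triangular $T$ and upper triangular $T'$ with $TT'=0$.
\item Take $T=(t)$, $T'=(0)$ if $r=n-1$. Take $T={\rm diag}(0,t)$, $T'={\rm diag}(s,0)$ if $r=n-2$.
\item Replace $(L_{11},U_{11})$ by $(L_{11}D,D^{-1}U_{11})$ with your greedy $D$. Then choose $t\notin\{0,d_1,\dots,d_r\}$ and, when needed, $s\notin\{0,u_1/d_1,\dots,u_r/d_r\}$. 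This gives two triangular factors, each with pairwise distinct diagonal entries.
\end{enumerate}
This also covers $A=0$ for $n\le 2$. With this step supplied, your route becomes a self-contained alternative to Sourour--Tang. Without it, sufficiency remains unproved for singular $A$.
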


This theorem is proved in the next section, using only classical arguments. Besides, we give, in Section \ref{finite}, some elements concerning the case of smaller fields.

\section{Proof of the main result}

The aim of this section is to prove Theorem \ref{11} and to give some elements concerning the decomposition studied in this result.

\subsection{Matrices similar to companion matrices} 

We give here some properties verified by the matrices similar to companion matrices. We begin with a useful characterization of them.

\begin{prop}
\label{21}

Let $n$ be a positive integer and $A \in M_{n}(\mathbb{K})$. The matrix $A$ is similar to a companion matrix if and only if $\pi_{A}=\chi_{A}$.

\end{prop}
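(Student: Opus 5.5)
We prove both implications, using the cyclic-vector characterization: $A$ is similar to a companion matrix if and only if $\mathbb{K}^{n}$ admits a vector $v$ such that $(v, Av, \dots, A^{n-1}v)$ is a basis.

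\textbf{Easy direction.} Suppose $A$ is similar to $C(P)$, so $A=QC(P)Q^{-1}$. Since $\pi_{A}=\pi_{C(P)}$ and $\chi_{A}=\chi_{C(P)}$, it suffices to show $\pi_{C(P)}=\chi_{C(P)}=P$. Let $(e_{1},\dots,e_{n})$ be the canonical basis. By definition of $C(P)$, we have $C(P)e_{i}=e_{i+1}$ for $i<n$, so $e_{1}$ is a cyclic vector: $(e_{1},C(P)e_{1},\dots,C(P)^{n-1}e_{1})=(e_{1},\dots,e_{n})$. Any nonzero $R\in\mathbb{K}[X]$ of degree $d<n$ satisfies $R(C(P))e_{1}\neq 0$, since this vector is a nontrivial linear combination of $e_{1},\dots,e_{d+1}$. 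Hence $\deg \pi_{C(P)}\geq n$. By Cayley--Hamilton, $\pi_{C(P)}$ divides $\chi_{C(P)}$, which is monic of degree $n$, so the two coincide. Finally, $\chi_{C(P)}=P$ by the standard expansion of $\det(XI_{n}-C(P))$ along the last column, or by induction on $n$.

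\textbf{Converse.} Suppose $\pi_{A}=\chi_{A}$, a polynomial of degree $n$. We need a vector $v$ whose local minimal polynomial $\pi_{A,v}$ equals $\pi_{A}$; then $(v,Av,\dots,A^{n-1}v)$ is free, hence a basis, and in that basis $A$ has matrix $C(\pi_{A})$ by Cayley--Hamilton. The existence of such a $v$ is the main obstacle. I would prove it via the kernel decomposition. Write $\pi_{A}=\prod_{i} P_{i}^{m_{i}}$ with the $P_{i}$ distinct monic irreducible polynomials. The kernel lemma gives $\mathbb{K}^{n}=\bigoplus_{i}\ker P_{i}(A)^{m_{i}}$. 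On $\ker P_{i}(A)^{m_{i}}$, the restriction of $A$ has minimal polynomial exactly $P_{i}^{m_{i}}$. Since $\ker P_{i}(A)^{m_{i}-1}$ is a proper subspace of $\ker P_{i}(A)^{m_{i}}$, we can pick $v_{i}\in\ker P_{i}(A)^{m_{i}}\setminus\ker P_{i}(A)^{m_{i}-1}$, whose local minimal polynomial is then $P_{i}^{m_{i}}$. Set $v=\sum_{i} v_{i}$. If $R(A)v=0$, projecting onto each summand (which is $A$-stable) gives $R(A)v_{i}=0$, so $P_{i}^{m_{i}}$ divides $R$ for every $i$. The $P_{i}^{m_{i}}$ are pairwise coprime, so $\pi_{A}$ divides $R$. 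Thus $\pi_{A,v}=\pi_{A}$, which completes the proof.

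This argument works over any field. An alternative for the converse is to use the invariant factors of the Frobenius normal form: $\pi_{A}$ is the largest invariant factor and $\chi_{A}$ is the product of all of them, so $\pi_{A}=\chi_{A}$ forces a single invariant factor, and the normal form is one companion block. I prefer the self-contained cyclic-vector argument above.
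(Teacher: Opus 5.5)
Your proof is correct and follows the same route as the paper. For the direct implication, both use similarity invariance of $\pi$ and $\chi$ together with $\pi_{C(P)}=\chi_{C(P)}$; for the converse, both take a vector $v$ with $\pi_{A,v}=\pi_{A}$, whose iterates form a basis in which $A$ has a companion matrix. The only difference is that you prove two facts the paper takes as known: it cites Horn--Johnson for $\pi_{C(P)}=\chi_{C(P)}$, which you derive from the cyclic vector $e_{1}$, and it asserts without proof that some $x$ has $\pi_{u,x}=\pi_{u}$, which you build with the kernel-lemma construction.
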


\begin{proof}

Suppose that there exist $Q \in GL_{n}(\mathbb{K})$ and $P \in \mathbb{K}[X]$ such that $A=QC(P)Q^{-1}$. We have $\pi_{A}=\pi_{C(P)}$ and $\chi_{A}=\chi_{C(P)}$. Besides, since $C(P)$ is a companion matrix, $\pi_{C(P)}=\chi_{C(P)}$ (see for instance \cite{HJ} Theorem 3.3.14). Hence, $\pi_{A}=\chi_{A}$.
\\
\\Suppose that $\pi_{A}=\chi_{A}$. Let $u$ be the endomorphism of $\mathbb{K}^{n}$ whose matrix in the canonical basis is $A$. There exists $x \in \mathbb{K}^{n}$ such that $\pi_{u,x}=\pi_{u}=\pi_{A}$, with $\pi_{u,x}$ the monic polynomial which generates the ideal $\{P \in \mathbb{K}[X],~P(u)(x)=0\}$. Hence, the family $(x,u(x),\ldots,u^{n-1}(x))$ is linearly inependant, since $n-1< {\rm deg}(\pi_{u,x})=n$. So, $\mathcal{B}:=(x,u(x),\ldots,u^{n-1}(x))$ is a basis of $\mathbb{K}^{n}$ and the matrix of $u$ in $\mathcal{B}$ is a companion matrix.

\end{proof}

Note that $\pi_{A}=\chi_{A}$ if and only if ${\rm deg}(\pi_{A})=n$ (this is an easy consequence of Cayley-Hamilton theorem). We also have the following characterization:

\begin{prop}[\cite{FL} Lemma 3.1]
\label{22}

Let $n$ be a positive integer and $A \in M_{n}(\mathbb{K})$. The polynomials $\pi_{A}$ and $\chi_{A}$ coincide if and only if, for every $(x_{1},\ldots,x_{n}) \in \mathbb{K}^{n}$, there exists $(P,Q) \in M_{n,1}(\mathbb{K})^{2}$ such that, for all $1 \leq k \leq n$, $x_{k}={}^t\! Q A^{k}P$.

\end{prop}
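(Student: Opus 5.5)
The plan is to prove the two implications separately. Throughout, $A$ is replaced by a companion matrix $C(\chi_A)$ when convenient, using Proposition \ref{21}. This replacement is harmless: if $A=RCR^{-1}$, then ${}^t\!QA^kP={}^t({}^tR\,Q)\,C^k\,(R^{-1}P)$, and $(P,Q)\mapsto(R^{-1}P,{}^tR\,Q)$ is a bijection of $M_{n,1}(\mathbb{K})^2$.

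\emph{Sufficiency (every sequence is attained $\Rightarrow$ $\pi_A=\chi_A$).} We argue by contraposition. Suppose $\pi_A\neq\chi_A$. By the remark after Proposition \ref{21}, $d:={\rm deg}(\pi_A)\le n-1$. Write $\pi_A=X^d+\sum_{i<d}c_iX^i$. For all $P,Q$, multiplying $\pi_A(A)=0$ by $A$ gives
\[{}^t\!Q\,A\,\pi_A(A)\,P=0,\quad\text{that is,}\quad x_{d+1}+\sum_{i<d}c_i\,x_{i+1}=0,\]
where $x_k={}^t\!QA^kP$. This relation only involves indices $1\le k\le d+1\le n$, so it is a genuine constraint on $(x_1,\ldots,x_n)$. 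The vector with $x_1=\cdots=x_d=0$, $x_{d+1}=1$ and the remaining entries arbitrary violates it, so that vector is not attained. This direction works exactly as stated.

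\emph{Necessity ($\pi_A=\chi_A$ $\Rightarrow$ every sequence is attained).} Proposition \ref{21} gives a cyclic vector $e$, so $(e,Ae,\ldots,A^{n-1}e)$ is a basis. Set $P'=AP$, so that $x_k={}^t\!Q\,A^{k-1}P'$. If $P'=e$ can be chosen, take ${}^t\!Q$ to be the linear form whose values on this basis are $x_1,\ldots,x_n$; then ${}^t\!QA^{k-1}e=x_k$ for every $k$. Choosing $P'=e$ requires $e\in{\rm Im}(A)$, and this holds whenever $A$ is invertible: take $P=A^{-1}e$. So in the invertible case the proof is complete.

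\emph{Main obstacle: singular $A$.} Here the statement as worded breaks down. For the nilpotent Jordan block $J$ of size $n$ we have $\pi_J=\chi_J=X^n$ but $J^n=0$, so $x_n=0$ always. More generally, if $A$ is singular and cyclic, then $\ker A$ is one-dimensional and ${\rm Im}(A)$ is a hyperplane not containing $e$. I would check this obstruction first. I would then record the necessity direction either under the hypothesis that $A$ is invertible, or with the exponents shifted to $0,\ldots,n-1$, which is the form used in \cite{FL}. With the shifted exponents the argument above goes through with $P=e$ directly, and no invertibility is needed. In the subsequent application to Theorem \ref{11}, I expect only this shifted form, or the invertible case, to be needed, and I would adapt the citation accordingly.
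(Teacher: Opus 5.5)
Your analysis is correct. The paper gives no proof here to compare against; it only quotes the statement as Lemma 3.1 of Farahat--Ledermann.

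\begin{itemize}
\item Your contrapositive argument via ${}^t\!Q\,A\,\pi_A(A)\,P=0$ correctly proves that if every sequence is attained then $\pi_A=\chi_A$.
\item Your choice $P=A^{-1}e$ settles the converse for invertible $A$.
\item Your counterexample is valid: for the nilpotent Jordan block $J$ one has $\pi_J=\chi_J=X^n$, but ${}^t\!QJ^nP=0$ for all $P,Q$.
\end{itemize}

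So the statement as transcribed, with exponents $1,\ldots,n$, is false. The fix you propose, exponents $0,\ldots,n-1$, is the right one. It is also the form that the bordering argument behind Theorem \ref{23} relies on, since bordering a matrix $B$ of size $n-1$ produces the quantities ${}^t\!q\,B^kp$ with $0\leq k\leq n-2$.

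Two small sharpenings.

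First, your observation that ${\rm Im}(A)$ is a hyperplane missing the cyclic vector only shows that your particular construction breaks down. To see that the statement fails for \emph{every} singular $A$, use Cayley--Hamilton. If $A$ is singular, then $\chi_A=X^n+a_{n-1}X^{n-1}+\cdots+a_1X$ has zero constant term. The identity ${}^t\!Q\,\chi_A(A)\,P=0$ then reads $x_n+a_{n-1}x_{n-1}+\cdots+a_1x_1=0$, so $(0,\ldots,0,1)$ is never attained. Combined with your two correct directions, the version with exponents $1,\ldots,n$ characterizes exactly the invertible matrices with $\pi_A=\chi_A$.

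Second, Proposition \ref{22} is never invoked in the proof of Theorem \ref{11} or anywhere else in the paper, and neither is Theorem \ref{23}. The misquotation therefore has no downstream effect, and no application needs adapting. Your preliminary reduction to a companion matrix is also never used and can be dropped.
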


The matrices similar to companion matrices appear in particular in the following well-known matrix-completion result:

\begin{thm}[Farahat-Ledermann, \cite{FL} Theorem 3.4]
\label{23}

Let $n$ be a positive integer, $B \in M_{n-1}(\mathbb{K})$ such that $\pi_{B}=\chi_{B}$, and $R(X)$ a given monic polynomial of degree $n$ over $\mathbb{K}$. Then there exists an $n \times n$ matrix having $B$ in the top left-hand corner, whose characteristic polynomial is $R(X)$.

\end{thm}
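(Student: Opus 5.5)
The plan is to put the completed matrix in bordered form and reduce the condition on its characteristic polynomial to a triangular linear system. Concretely, we look for $u \in M_{n-1,1}(\mathbb{K})$, $v \in M_{n-1,1}(\mathbb{K})$ and $c \in \mathbb{K}$ such that
\[M:=\begin{pmatrix} B & u \\ {}^t v & c \end{pmatrix}\]
satisfies $\chi_{M}=R$.

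First, expand $\chi_M$ along the last row and column. Equivalently, use the Schur complement over $\mathbb{K}(X)$. This gives
\[\chi_{M}(X)=(X-c)\chi_{B}(X)-{}^t v\,{\rm adj}(XI_{n-1}-B)\,u.\]
Write $\chi_{B}(X)=\sum_{i=0}^{n-1} b_{i}X^{i}$ with $b_{n-1}=1$. The classical identity $(XI_{n-1}-B)\,{\rm adj}(XI_{n-1}-B)=\chi_B(X)I_{n-1}$ then yields the expansion
\[{\rm adj}(XI_{n-1}-B)=\sum_{j=0}^{n-2} X^{j}\sum_{i=0}^{n-2-j} b_{i+j+1}B^{i}.\]
Setting $x_i:={}^t v B^{i} u$, we obtain
\[{}^t v\,{\rm adj}(XI_{n-1}-B)\,u=\sum_{j=0}^{n-2}X^{j}\sum_{i=0}^{n-2-j} b_{i+j+1}x_{i}.\]
In this sum the coefficient of $X^{n-2}$ is $x_0$. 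The coefficient of $X^{n-3}$ is $x_1+b_{n-2}x_0$, and in general the coefficient of $X^{n-2-k}$ is $x_k$ plus a combination of $x_0,\ldots,x_{k-1}$. So the map $(x_0,\ldots,x_{n-2})\mapsto$ (coefficients of this polynomial) is unitriangular, hence bijective onto polynomials of degree at most $n-2$.

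Second, choose $c$ so that the coefficient of $X^{n-1}$ in $(X-c)\chi_B$ equals that of $R$. This means $b_{n-2}-c$ must equal the $X^{n-1}$ coefficient of $R$. Then $(X-c)\chi_B-R$ has degree at most $n-2$. By the triangular structure there is a unique tuple $(x_0,\ldots,x_{n-2})\in\mathbb{K}^{n-1}$ making ${}^t v\,{\rm adj}(XI_{n-1}-B)\,u$ equal to this polynomial. For any $u,v$ realizing these values we get $\chi_M=R$.

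The remaining step, and the only one using the hypothesis $\pi_B=\chi_B$, is to realize arbitrary prescribed values $x_i={}^t v B^i u$ for $0\le i\le n-2$. Proposition \ref{22} gives this for exponents $1,\ldots,n-1$ rather than $0,\ldots,n-2$, so I would instead argue directly, which avoids the index shift. As in the proof of Proposition \ref{21}, since $\pi_B=\chi_B$ has degree $n-1$, there is a vector $u$ such that $(u,Bu,\ldots,B^{n-2}u)$ is a basis of $M_{n-1,1}(\mathbb{K})$. A linear form is determined freely by its values on a basis, so there is a (unique) $v$ with ${}^t v B^{i}u=x_i$ for all $0\le i\le n-2$. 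This completes the construction.

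I expect the main care to be needed in the coefficient bookkeeping of the adjugate expansion, namely checking the triangularity with the correct indices. The cyclic-vector step is then immediate.
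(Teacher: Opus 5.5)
Your argument is correct and complete. The paper gives no proof of this statement; it only quotes Farahat--Ledermann. Your route is the classical one the paper alludes to by recording their Lemma 3.1 as Proposition \ref{22}:
\begin{itemize}
\item the characteristic polynomial of the bordered matrix is $(X-c)\chi_B(X)$ minus a polynomial whose coefficients come from the numbers ${}^t v B^{i} u$ through a unitriangular change of variables;
\item cyclicity of $B$ is exactly what makes these numbers freely prescribable.
\end{itemize}

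You were right to argue the last step directly with a cyclic vector, but the reason is not the index shift. Going from exponents $1,\ldots,n-1$ to $0,\ldots,n-2$ would be harmless: take $u=P$ and $v={}^tB\,Q$, so that ${}^tvB^{i}u={}^tQB^{i+1}P$. The real problem is that Proposition \ref{22} as printed, with exponents $1,\ldots,n$, has a false forward implication for every singular $A$ with $\pi_A=\chi_A$. In that case $\chi_A(0)=0$ and $\chi_A(A)=0$ give a linear relation among $A,\ldots,A^{n}$, so the values ${}^tQA^kP$ cannot be arbitrary. For instance, $A=(0)$ gives ${}^tQAP=0$ for all $P,Q$. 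This matters here because $B$ is not assumed invertible. The correct form uses exponents $0,\ldots,n-1$, and that is precisely what your cyclic-vector step proves for $B$.
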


\noindent We conclude this subsection with the easy result below.

\begin{prop}
\label{24}

Let $n$ be a positive integer. Let $A \in M_{n}(\mathbb{K})$ similar to a companion matrix. The rank of $A$ is equal to $n$ or $n-1$.

\end{prop}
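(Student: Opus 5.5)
By Proposition \ref{21}, it suffices to treat the case where $A$ is a companion matrix $C(P)$ with $P(X)=X^{n}+\sum_{i=0}^{n-1}a_{i}X^{i}$. Similarity preserves rank, since for $A=QC(P)Q^{-1}$ with $Q$ invertible we have ${\rm rg}(A)={\rm rg}(C(P))$. So the plan is to compute the rank of $C(P)$ directly from its shape.

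Consider the first $n-1$ columns of $C(P)$. For $1\leq j\leq n-1$, the $j$-th column is the vector $e_{j+1}$ of the canonical basis of $M_{n,1}(\mathbb{K})$. These columns are therefore $e_{2},\ldots,e_{n}$, which are linearly independent. Hence ${\rm rg}(C(P))\geq n-1$.

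Since ${\rm rg}(C(P))\leq n$ trivially, the rank is $n$ or $n-1$. More precisely, it equals $n$ exactly when $a_{0}\neq 0$, because ${\rm det}(C(P))=(-1)^{n}a_{0}$. This refinement is not needed here. There is no real obstacle: the only point to check is that the reduction to $C(P)$ is legitimate, which is exactly the direct implication of Proposition \ref{21}.

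Alternatively, one can argue without matrices. Take $x$ with $(x,u(x),\ldots,u^{n-1}(x))$ a basis. Then the image of $u$ contains $u(x),\ldots,u^{n-1}(x)$, which are $n-1$ independent vectors. This again gives rank at least $n-1$.
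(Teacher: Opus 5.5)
Your proof is correct and takes the same approach as the paper: the paper also reduces to $C(P)$ by invariance of rank under similarity, and observes that $C(P)$ contains $I_{n-1}$ as a submatrix (your columns $e_{2},\ldots,e_{n}$), which gives ${\rm rg}(A)\geq n-1$. One small remark: you do not need Proposition~\ref{21} here, since the hypothesis already gives $A=QC(P)Q^{-1}$ directly.
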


\begin{proof}

There exists $P \in \mathbb{K}[X]$ such that $A$ is similar to $C(P)$. We can extract from $C(P)$ a submatrix equal to the identity matrix of size $n-1$. Hence, ${\rm rg}(A)={\rm rg}(C(P)) \geq n-1$.

\end{proof}

\subsection{Proof of the decomposition theorem} 

The aim of this subsection is to prove Theorem \ref{11}. We decompose it into several subresults presented as lemmas. 

\subsubsection{Proof of the necessary condition}
	
We consider here the case of the matrices whose rank is lower than $n-3$ when $n$ is a positive integer greater than three.

\begin{lem}
\label{26}

Let $n \geq 3$ and $A \in M_{n}(\mathbb{K})$. If ${\rm rg}(A) \leq n-3$ then $A$ cannot be written as a product of two matrices similar to companion matrices.

\end{lem}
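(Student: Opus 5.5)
We argue by contraposition using only rank considerations. Suppose that $A=BC$ with $B,C\in M_{n}(\mathbb{K})$ both similar to companion matrices. By Proposition \ref{24}, each of ${\rm rg}(B)$ and ${\rm rg}(C)$ is at least $n-1$. We then apply Sylvester's rank inequality,
\[{\rm rg}(BC)\geq {\rm rg}(B)+{\rm rg}(C)-n,\]
which gives ${\rm rg}(A)\geq (n-1)+(n-1)-n=n-2$. This contradicts the hypothesis ${\rm rg}(A)\leq n-3$, so no such factorization exists.

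For completeness we would recall the short proof of Sylvester's inequality, since the paper wants to rely only on classical facts. Let $u,v$ be the endomorphisms of $\mathbb{K}^{n}$ associated with $B$ and $C$. Restricting $u$ to ${\rm Im}(v)$ gives
\[{\rm rg}(u\circ v)=\dim {\rm Im}(v)-\dim(\ker u\cap {\rm Im}(v)).\]
Since $\dim(\ker u\cap {\rm Im}(v))\leq \dim\ker u=n-{\rm rg}(u)$, this yields ${\rm rg}(u\circ v)\geq {\rm rg}(v)+{\rm rg}(u)-n$.

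There is no real obstacle here. The only point to check is that Proposition \ref{24} applies to each factor, which holds because each factor is assumed similar to a companion matrix. The hypothesis $n\geq 3$ only ensures that the condition ${\rm rg}(A)\leq n-3$ can occur, and no assumption on the size of $\mathbb{K}$ is needed for this direction.
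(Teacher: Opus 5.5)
Your proof is correct. It takes a shorter and more general route than the paper's.

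The paper proceeds in three steps:
\begin{itemize}
\item It first disposes of the case $B$ invertible, where ${\rm rg}(A)={\rm rg}(C)\geq n-1$.
\item It then uses Proposition \ref{21} to conjugate $B$ into companion form with $b_{1}=0$, and solves $A'=(PBP^{-1})(PCP^{-1})$ for the rows of $PCP^{-1}$.
\item This gives $PCP^{-1}=D+E$, where $D$ is a cyclic row permutation of $A'$ and ${\rm rg}(E)\leq 1$. Hence ${\rm rg}(C)\leq {\rm rg}(A)+1\leq n-2$, contradicting Proposition \ref{24}.
\end{itemize}
That computation is in effect a hand proof of Sylvester's inequality in the special case ${\rm rg}(B)=n-1$. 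Indeed, ${\rm rg}(C)\leq {\rm rg}(BC)+n-{\rm rg}(B)$ is exactly what it establishes, using the explicit shape of a companion matrix.

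Your version invokes the general inequality instead. It needs neither the case split on the invertibility of $B$ nor Proposition \ref{21}, only Proposition \ref{24} applied to both factors. Your sketch of the inequality, restricting $u$ to ${\rm Im}(v)$, is complete.

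Your approach also extends at no cost. Iterating the inequality shows that a product of $k$ matrices similar to companion matrices has rank at least $n-k$. So a matrix of rank $n-h$ cannot be written with fewer than $h$ such factors, which shows that the number of factors in the last proposition of Section 2 (and the bound $l\geq h$ in the remark after it) cannot be lowered.

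The paper's computation, in exchange, is fully explicit: it exhibits $PCP^{-1}$ concretely as a rank-one perturbation of a row permutation of $PAP^{-1}$.

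A minor wording point: you argue by contradiction, not by contraposition.
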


\begin{proof}

Suppose that there exists $(B,C) \in M_{n}(\mathbb{K})^{2}$ such that $A=BC$, $\pi_{B}=\chi_{B}$ and $\pi_{C}=\chi_{C}$. The matrix $B$ is not invertible, since otherwise we would have, by Proposition \ref{24}, $n-1 \leq {\rm rg}(C)={\rm rg}(A) \leq n-3$. Since the polynomial $\pi_{B}$ is equal to $\chi_{B}$, there exist $P \in GL_{n}(\mathbb{K})$ and $(b_{1},\ldots,b_{n}) \in \mathbb{K}^{n}$ such that $PBP^{-1}=\begin{pmatrix}
   0 & 0 & \ldots & 0 & b_{1}  \\
      1 & 0 & \ldots & 0 & b_{2}   \\
		  0 & \ddots & \ddots & \vdots & \vdots  \\
		  \vdots & \ddots & \ddots & 0 & b_{n-1}  \\
		  0 & \ldots & 0 & 1 &  b_{n} \\
   \end{pmatrix}$ (by Proposition \ref{21}).
\\
\\We have $A':=PAP^{-1}=(PBP^{-1})(PCP^{-1})$. Let $A':=(a_{i,j})_{1 \leq i,j \leq n}$ and $PCP^{-1}:=(c_{i,j})_{1 \leq i,j \leq n}$. Since $B$ is not invertible, $b_{1}=0$. We have:
\begin{eqnarray*}
A' &=& \begin{pmatrix}
   0 & 0 & \ldots & 0 & 0  \\
      1 & 0 & \ldots & 0 & b_{2}   \\
		  0 & \ddots & \ddots & \vdots & \vdots  \\
		  \vdots & \ddots & \ddots & 0 & b_{n-1}  \\
		  0 & \ldots & 0 & 1 &  b_{n} \\
   \end{pmatrix}\begin{pmatrix}
   c_{1,1} & \ldots & c_{1,n}  \\
      \vdots &   & \vdots   \\
		  c_{n,1} & \ldots & c_{n,n} \\
   \end{pmatrix} \\
	     &=& \begin{pmatrix}
   0  & \ldots  & 0  \\
		  c_{1,1}+b_{2}c_{n,1} & \ldots & c_{1,n}+b_{2}c_{n,n} \\
			\vdots &   & \vdots  \\
			c_{n-1,1}+b_{n}c_{n,1} & \ldots & c_{n-1,n}+b_{n}c_{n,n} \\
   \end{pmatrix}.
\end{eqnarray*}

\noindent For all $1 \leq i \leq n-1$ and for all $1 \leq j \leq n$, we have $c_{i,j}+b_{i+1}c_{n,j}=a_{i+1,j}$ and so $c_{i,j}=a_{i+1,j}-b_{i+1}c_{n,j}$. Besides, $a_{1,j}=0$ for all $j$. Hence, setting $L:=\begin{pmatrix}
   c_{n,1} & \ldots & c_{n,n}  \\
   \end{pmatrix}$, we obtain:
\[PCP^{-1}=\begin{pmatrix}
   a_{2,1}-b_{2}c_{n,1} & \ldots & a_{2,n}-b_{2}c_{n,n}  \\
      \vdots & & \vdots \\
		  a_{n,1}-b_{n}c_{n,1} & \ldots & a_{n,n}-b_{n}c_{n,n}  \\
			c_{n,1} & \ldots & c_{n,n}  \\
   \end{pmatrix}=\underbrace{\begin{pmatrix}
   a_{2,1} & \ldots & a_{2,n}  \\
      \vdots & & \vdots \\
		  a_{n,1} & \ldots & a_{n,n}  \\
			a_{1,1} & \ldots & a_{1,n}  \\
   \end{pmatrix}}_{D}+\underbrace{\begin{pmatrix}
-b_{2}~L \\
	\vdots  \\
	-b_{n}~L \\
	  L
   \end{pmatrix}}_{E}.\]
\noindent We have ${\rm rg}(D)={\rm rg}(A) \leq n-3$ and ${\rm rg}(E)=1$. Hence, ${\rm rg}(C) \leq {\rm rg}(D)+{\rm rg}(E) \leq n-2$. However, by Proposition \ref{24}, ${\rm rg}(C) \geq n-1$. So, we have a contradiction. 
\\
\\Hence, $A$ cannot be written as a product of two matrices similar to companion matrices.

\end{proof}

\subsubsection{The case of invertible matrices}

We begin by the case $n \leq 2$ for invertible and non-invertible matrices.

\begin{lem}
\label{25}

If $n=1$ or $n=2$ then every square matrix $A \in M_{n}(\mathbb{K})$ can be written as a product of two matrices similar to companion matrices.

\end{lem}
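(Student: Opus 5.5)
For $n=1$ the statement is immediate. Every $1\times 1$ matrix $(a)$ satisfies $\pi_{(a)}=\chi_{(a)}=X-a$, so it is already a companion matrix, namely $C(X-a)$. We can therefore write $A=A\cdot I_{1}$, and both factors are companion matrices.

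For $n=2$, we use the remark after Proposition~\ref{21}: a $2\times 2$ matrix $M$ satisfies $\pi_{M}=\chi_{M}$ if and only if $\deg(\pi_{M})=2$. This holds exactly when $M$ is not a scalar matrix $\lambda I_{2}$. The plan is to split into two cases according to whether $A$ is scalar.

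\emph{Case 1: $A$ is not scalar.} Then $\pi_{A}=\chi_{A}$ by this remark. We can take $B=A$ and $C=I_{2}$, but $I_{2}$ is scalar and hence not allowed. So we look for a non-scalar invertible $C$ such that $AC^{-1}$ is also non-scalar. Take $C=\begin{pmatrix}1&1\\0&1\end{pmatrix}$, which is non-scalar. If $AC^{-1}$ happened to equal $\lambda I_{2}$, then $A=\lambda C$. Such an $A$ is still a product of two cyclic matrices, since $A=C\cdot(\lambda I_2)$ fails but we can choose differently: for instance $C'=\begin{pmatrix}1&0\\1&1\end{pmatrix}$. If both $AC^{-1}$ and $AC'^{-1}$ were scalar, then $A$ would be a nonzero multiple of both $C$ and $C'$, which is impossible unless $A=0$. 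But $A=0$ is scalar and so is excluded from this case. Hence one of the two choices works.

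\emph{Case 2: $A=\lambda I_{2}$.} Here we need explicit factors. We use the companion matrix $J=C(X^{2}-1)=\begin{pmatrix}0&1\\1&0\end{pmatrix}$, which is non-scalar and satisfies $J^{2}=I_{2}$.
\begin{itemize}
\item If $\lambda\neq0$, write $\lambda I_{2}=(\lambda J)\,J$. Both factors are non-scalar, hence cyclic.
\item If $\lambda=0$, write $0_{2,2}=N\cdot N$ with $N=\begin{pmatrix}0&0\\1&0\end{pmatrix}=C(X^{2})$, since $N^{2}=0$.
\end{itemize}

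No step is a real obstacle. The only point requiring care is the $2\times2$ characterization "cyclic $\iff$ non-scalar". It follows because a non-scalar $M$ has no minimal polynomial of degree $1$, so $\deg\pi_M=2$ by Cayley--Hamilton. The rest consists of the small case checks in Case 1, which guarantee that the cofactor $AC^{-1}$ is not scalar.
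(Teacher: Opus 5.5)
Your proof is correct, but it takes a different route from the paper's. The paper computes directly with the entries of $A=\begin{pmatrix} a & b\\ c & d\end{pmatrix}$. For $a\neq 0$ it exhibits $A=C(X^{2}-cX-a)\,C(X^{2}-a^{-1}bX-d+a^{-1}cb)$, a product of two genuine companion matrices. For $a=0$ it writes $A=C(X^{2}-(d-1)X-b)\begin{pmatrix} c & 1\\ 0 & 1\end{pmatrix}$ and checks by hand that the second factor is cyclic. You instead isolate the principle that the paper only uses locally in that last step: a $2\times 2$ matrix is similar to a companion matrix if and only if it is non-scalar. This turns the lemma into the problem of factoring $A$ as a product of two non-scalar matrices, which you solve without coordinates. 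For non-scalar $A$, the linear independence of $\begin{pmatrix} 1 & 1\\ 0 & 1\end{pmatrix}$ and $\begin{pmatrix} 1 & 0\\ 1 & 1\end{pmatrix}$ ensures that $AC^{-1}$ and $AC'^{-1}$ cannot both be scalar. The scalar case is handled by $\lambda I_{2}=(\lambda J)J$ and $0_{2,2}=C(X^{2})^{2}$. Your argument is shorter and more conceptual. The paper's is more informative, since for $a\neq 0$ (and, by the remark following the lemma, also when $a=b=0$) both factors are actual companion matrices rather than merely similar to them, which addresses the stricter question raised in the introduction. One expository point: the sentence about $A=C\cdot(\lambda I_{2})$ ``failing'' is confusing and can simply be dropped, since your argument with $C'$ already covers the case $A=\lambda C$.
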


\begin{proof}

Suppose that $n=1$. There exists $a \in \mathbb{K}$ such that $A=(a)$. We have:
\[A=(a) \times (1)=C(X-a) \times C(X-1).\]

\noindent Suppose that $n=2$. There exists $(a,b,c,d) \in \mathbb{K}^{4}$ such that $A=\begin{pmatrix}
   a & b \\
    c & d   \\
   \end{pmatrix}$. First, we suppose $a \neq 0$. We have: 
	\[A=\begin{pmatrix}
   0 & a \\
   1 & c  \\
   \end{pmatrix}\begin{pmatrix}
   0 & d-a^{-1}cb \\
   1 & 	a^{-1}b \\
   \end{pmatrix}=C(X^{2}-cX-a) \times C(X^{2}-a^{-1}bX-d+a^{-1}cb).\]

\noindent Now, we suppose $a=0$. We have: $A=\begin{pmatrix}
   0 & b \\
   1 & d-1  \\
   \end{pmatrix}\begin{pmatrix}
   c &  1 \\
   0 & 	1 \\
   \end{pmatrix}$. We set $B:=\begin{pmatrix}
   c &  1 \\
   0 & 	1 \\
   \end{pmatrix}$. An immediate calculation gives $\chi_{B}(X)=(X-c)(X-1)$. By Cayley-Hamilton theorem, $\pi_{B}$ divides $\chi_{B}$ and $(X-1)(B)$ and $(X-c)(B)$ are different from $0_{2,2}$. Hence, $\pi_{B}=\chi_{B}$ and $A=C(X^{2}-(d-1)X-b) \times B$. By Proposition \ref{21}, we have the desired result.

\end{proof}

\begin{rem}
{\rm \noindent If $a=b=0$, we have: $A=\begin{pmatrix}
   0 & 0 \\
   1 & c  \\
   \end{pmatrix}\begin{pmatrix}
   0 &  d \\
   1 & 	0 \\
   \end{pmatrix}=C(X^{2}-cX) \times C(X^{2}-d)$.
	}
	\end{rem}

\noindent We now consider the case of invertible matrices.

\begin{lem}
\label{27}

Let $n \geq 1$, $\mathbb{K}$ a commutative field with $\left|\mathbb{K}\right| \geq 2n$ and $A \in GL_{n}(\mathbb{K})$. The matrix $A$ can be written as a product of two (invertible) matrices similar to companion matrices.

\end{lem}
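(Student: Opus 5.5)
Our plan is to argue by induction on $n$, using the Farahat--Ledermann completion theorem (Theorem \ref{23}) to go from size $n-1$ to size $n$. The cases $n=1,2$ are covered by Lemma \ref{25}. For general $n$, we replace $A$ by a conjugate: if $A=BC$ with $B$, $C$ cyclic, then $QAQ^{-1}=(QBQ^{-1})(QCQ^{-1})$ and the factors stay cyclic. So we may put $A$ in a convenient form. A cleaner route is to aim for $A$ similar to $B C$ where $B$ is diagonal with \emph{distinct} nonzero entries. A diagonal matrix with distinct entries has $\pi=\chi$, and this is where the hypothesis $|\mathbb{K}|\geq 2n$ should enter.

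\textbf{Key steps.} We look for $D=\mathrm{diag}(d_1,\dots,d_n)$, with the $d_i$ pairwise distinct and nonzero, such that $D^{-1}A'$ is cyclic for some $A'$ similar to $A$.

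First, by a classical result (every non-scalar matrix is similar to a matrix with prescribed-in-advance zero pattern, e.g.\ a Hessenberg or companion-like form), we conjugate $A$ into a matrix $A'$ whose structure makes $D^{-1}A'$ easy to analyse. Over the field, $A$ is similar to a block-diagonal sum of companion matrices (Frobenius form). If $A$ is non-derogatory we are done with $A=A\cdot I$? No: $I$ is not cyclic for $n\geq 2$. So instead we write $A=(A D^{-1})\,D$ and try to make $AD^{-1}$ cyclic.

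Then we apply Proposition \ref{22}: a matrix $M$ is cyclic iff there are vectors $P,Q$ realising any prescribed sequence ${}^tQM^kP$. Equivalently, $M$ is cyclic iff some vector $x$ has $x,Mx,\dots,M^{n-1}x$ independent. For $M=A'D^{-1}$ with $A'$ in a suitable lower Hessenberg form with nonzero subdiagonal (available when $A$ is non-scalar, by choosing a cyclic-vector-like basis), $M$ is also lower Hessenberg with nonzero subdiagonal, hence cyclic. The only requirement is then that $D$ be cyclic, i.e.\ have $n$ distinct nonzero entries. For that, $|\mathbb{K}|\geq n+1$ suffices.

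Second, the scalar case $A=\lambda I_n$ has to be handled separately. We write $\lambda I=D\cdot(\lambda D^{-1})$ with $D$ having distinct nonzero entries; then $\lambda D^{-1}$ also has distinct entries, so it is cyclic. Alternatively, we take $D=C(P)$ with $P(0)\neq0$ and note that $\lambda C(P)^{-1}$ is cyclic, since it is a polynomial-inverse of a cyclic matrix with the same commutant.

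\textbf{Main obstacle.} The delicate step is producing, for non-scalar invertible $A$, a conjugate $A'$ in lower Hessenberg form with all subdiagonal entries nonzero. This fails for derogatory $A$: a Hessenberg matrix with nonzero subdiagonal is automatically cyclic. So the right requirement is weaker. We need $A'D^{-1}$ to have nonzero subdiagonal, while $A'$ itself is not necessarily Hessenberg.

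We would try to build the factorisation via Theorem \ref{23} instead. Split off a block, use the induction hypothesis on an $(n-1)$-size block, and use Farahat--Ledermann to complete one factor with a prescribed characteristic polynomial having distinct roots avoiding previously used values. This choice of fresh distinct roots is exactly where roughly $2n$ field elements are consumed: up to $n$ for each factor's spectrum, with $0$ excluded.
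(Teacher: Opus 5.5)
\textbf{There is a genuine gap: the non-scalar derogatory case is never proved.}

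What your proposal does establish is correct. The scalar case works: $\lambda I_n=D\cdot(\lambda D^{-1})$ with $D$ diagonal with distinct nonzero entries, which is essentially the paper's argument. The non-derogatory case also works: take $A'=C(\chi_A)$, which is Hessenberg with nonzero subdiagonal, so $A'D^{-1}$ is as well and is therefore cyclic.

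The real content of the lemma, however, is the non-scalar derogatory case, for example $A=\begin{pmatrix}1&1&0\\0&1&0\\0&0&1\end{pmatrix}$. There you have no argument, and the repair you suggest cannot work:
\begin{itemize}
\item Right multiplication by an invertible diagonal matrix only rescales columns, so $A'D^{-1}$ has exactly the zero pattern of $A'$. Hence $A'D^{-1}$ is Hessenberg with nonzero subdiagonal if and only if $A'$ is, and that forces $A$ to be cyclic.
\item A nonzero subdiagonal without the Hessenberg zero pattern gives no cyclicity criterion. For instance, $I_3+J$ (with $J$ the all-ones matrix) has subdiagonal entries equal to $1$, but $\dim\ker J=2$ makes it derogatory.
\item Your final paragraph via Farahat--Ledermann is only a plan, and it ignores the coupling between the two factors. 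Once you complete one factor $F$ so that it has a characteristic polynomial with distinct roots, the other factor is forced to be $F^{-1}A'$. Nothing in your sketch guarantees that this matrix is cyclic.
\end{itemize}

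The missing idea is a block-LU factorization that feeds the induction directly; it makes Theorem \ref{23} unnecessary. Take $A$ non-scalar of size $n+1$ and let $u$ be the associated endomorphism. Pick $v$ with $(v,u(v))$ linearly independent and work in the basis $(v,u(v)-v,w_3,\ldots,w_{n+1})$. Then $A$ becomes $B=\left(\begin{array}{c|c}1&x\\ \hline y&C\end{array}\right)$ with $y={}^t(1,0,\ldots,0)$. Because the corner entry is $1$, $\det B=\det(C-yx)$, so the Schur complement $C-yx$ is invertible. By induction, $C-yx=DE$ with $D,E\in GL_n(\mathbb{K})$ cyclic. Then for every $\mu\neq 0$,
\[
B=\left(\begin{array}{c|c}\mu&0_{1,n}\\ \hline \mu y&D\end{array}\right)\left(\begin{array}{c|c}\mu^{-1}&\mu^{-1}x\\ \hline 0_{n,1}&E\end{array}\right).
\]
Each factor is block triangular, so its minimal polynomial is a multiple of ${\rm lcm}(X-\mu,\pi_D)$, respectively ${\rm lcm}(X-\mu^{-1},\pi_E)$. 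Choosing $\mu\notin{\rm Sp}(D)$ and $\mu^{-1}\notin{\rm Sp}(E)$ makes these lcm's equal to the characteristic polynomials, so both factors are cyclic. This is exactly where $|\mathbb{K}|\geq 2(n+1)$ is used: at most $2n$ nonzero values are forbidden for $\mu$, while $|\mathbb{K}\setminus\{0\}|\geq 2n+1$.
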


\begin{proof}

First, we consider the case of invertible scalar matrices. Let $A \in M_{n}(\mathbb{K})$ be an invertible scalar matrix. There exists $\lambda \in \mathbb{K}$, $\lambda \neq 0$, such that $A=\lambda I_{n}$. We choose $n$ pairwise distinct elements $\mu_{1},\ldots,\mu_{n}$ in $\mathbb{K}-\{0\}$ (this choice is possible since $\left|\mathbb{K}\right| \geq 2n$). We define the two following matrices:
\[B:=\begin{pmatrix}
   \lambda \mu_{1} &  &   \\
       &  \ddots &    \\
		  &  & \lambda \mu_{n}  \\
   \end{pmatrix}~~{\rm and}~~C:=\begin{pmatrix}
   \mu_{1}^{-1} &  &   \\
       &  \ddots &    \\
		  &  & \mu_{n}^{-1} \\
   \end{pmatrix}.\]
\noindent We have the following equalities $A=BC$, $\pi_{B}(X)=\chi_{B}(X)=\prod_{i=1}^{n} (X-\lambda \mu_{i})$ (since $\lambda \mu_{i} \neq \lambda \mu_{j}$ for $i \neq j$) and $\pi_{C}(X)=\chi_{C}(X)=\prod_{i=1}^{n} (X-\mu_{i}^{-1})$ (since $\mu_{i}^{-1} \neq \mu_{j}^{-1}$ for $i \neq j$).
\\
\\We return to the general case. We proceed by induction on $n$. If $n=1$, the result is true. Suppose that there exists an integer $n \geq 1$ such that, for all commutative field $\mathbb{K}$ satisfying $\left|\mathbb{K}\right| \geq 2n$, every invertible matrix of size $n$ over $\mathbb{K}$ can be written as a product of two invertible matrices similar to companion matrices. Let $\mathbb{K}$ be a commutative field $\mathbb{K}$ satisfying $\left|\mathbb{K}\right| \geq 2(n+1)=2n+2$. Let $A \in GL_{n+1}(\mathbb{K})$. 
\\
\\If $A$ is a scalar matrix then $A$ can be written as a product of two invertible matrices similar to companion matrices. We now assume that $A$ is non-scalar (this case is possible since $n+1 \geq 2$). Let $u$ be the endomorphism of $\mathbb{K}^{n+1}$ whose matrix in the canonical basis is $A$. Since $A$ is non-scalar, there exists $v \in \mathbb{K}^{n+1}$ such that $(v,u(v))$ is linealy independant. Thus, $(v,u(v)-v)$ is linearly independant. We complete this family into a basis $\mathcal{C}:=(v,u(v)-v,w_{3},\ldots,w_{n+1})$ of $\mathbb{K}^{n+1}$. The matrix of $u$ in $\mathcal{C}$ is similar to $A$ and is equal to:
\[B:=\left (
   \begin{array}{c|c}
      1 & x \\
      \hline
      y & C\\ 
   \end{array}
\right),\]
\noindent with $C:=(c_{i,j}) \in M_{n}(\mathbb{K})$, $x \in M_{1,n}(\mathbb{K})$ and $y:={}^t\!\begin{pmatrix}
   1 & 0 & \ldots & 0 \\
   \end{pmatrix} \in M_{n,1}(\mathbb{K})$.
\\
\\We set: $x:=\begin{pmatrix}
   x_{1} & \ldots & x_{n}  \\
   \end{pmatrix}$ and $T:=\left(\begin{array}{c|c}
      \begin{array}{cc}
      1 & 0 \\
      -1 & 1 \\ 
   \end{array} & 0_{2,n-1} \\
      \hline
      0_{n-1,2} & I_{n-1}\\ 
   \end{array}
\right)$. An immediate calculation gives: 
\[TB=\begin{pmatrix}
   1 & x_{1} & \ldots & x_{n}  \\
	 0 & c_{1,1}-x_{1} & \ldots & c_{1,n}-x_{n} \\
	 0 & c_{2,1} & \ldots & c_{2,n} \\
		\vdots & \vdots &   & \vdots \\
	 0 & c_{n,1} & \ldots & c_{n,n} \\
   \end{pmatrix}=\left (
   \begin{array}{c|c}
      1 & x \\
      \hline
      0_{n,1} & C-yx\\ 
   \end{array}
\right).\]
\noindent We have ${\rm det}(B)={\rm det}(TB)={\rm det}(C-yx)$. This implies ${\rm det}(C-yx) \neq 0$. By induction hypothesis, there exists $(D,E) \in GL_{n}(\mathbb{K})^{2}$ such that $C-yx=DE$, $\pi_{D}=\chi_{D}$ and $\pi_{E}=\chi_{E}$. 
\\
\\Besides, there exists $\mu \in \mathbb{K}$ different from 0 such that $\mu \notin {\rm Sp}(D)$ and $\mu^{-1} \notin {\rm Sp}(E)$. Indeed, the set ${\rm Sp}(D) \cup \{\lambda,~\lambda^{-1} \in {\rm Sp}(E)\}$ contains at most $2n$ elements. However, $\left|\mathbb{K}-\{0\}\right| \geq 2n+1$. Hence, there exists $\mu \in \mathbb{K}-\{0\}$ such that $\mu \notin {\rm Sp}(D) \cup \{\lambda,~\lambda^{-1} \in {\rm Sp}(E)\}$. This element verify the desired property.
\\
\\We have the following equalities:
\[\underbrace{\left(
   \begin{array}{c|c}
      \mu & 0_{1,n} \\
      \hline
       \mu y & D \\ 
   \end{array}\right)}_{F}\underbrace{\left(
   \begin{array}{c|c}
      \mu^{-1} & \mu^{-1}x \\
      \hline
       0_{n,1} & E\\ 
   \end{array}\right)}_{G}=\left(\begin{array}{c|c}
      1 & x \\
      \hline
      y & yx+DE\\ 
   \end{array}
\right)=\left (
   \begin{array}{c|c}
      1 & x \\
      \hline
      y & C\\ 
   \end{array}
\right)=B.\]

\noindent We have $\chi_{F}(X)=(X-\mu)\chi_{D}(X)$. By Cayley-Hamilton theorem, the polynomial $\pi_{F}(X)$ divides $\chi_{F}(X)$. Besides, $\pi_{F}(X)$ is a multiple of the lowest common multiple of $(X-\mu)$ and $\pi_{D}(X)$ (since $F$ is block-triangular). Since $\mu \notin {\rm Sp}(D)$, this lowest common multiple is $(X-\mu)\pi_{D}(X)=\chi_{F}(X)$. Hence, $\pi_{F}$ is equal to $\chi_{F}$. A similar argument gives $\pi_{G}=\chi_{G}$. Hence, $A$ can be written as a product of two matrices similar to companion matrices.
\\
\\By induction, the result is proved.

\end{proof}

\subsubsection{Two simple cases of non-invertible matrices}

The following two results do not play a role in the proof of the main theorem, but they provide two simple applications of the preceding lemma.

\begin{cor}
\label{28}

Let $n \geq 1$, $\mathbb{K}$ a commutative field with $\left|\mathbb{K}\right| \geq 2n-2$ and $A \in M_{n}(\mathbb{K})$ verifying ${\rm rg}(A)=n-1$ and $\chi_{A}(X)=XR(X)$, with $R(0) \neq 0$. The matrix $A$ can be written as a product of two matrices similar to companion matrices.

\end{cor}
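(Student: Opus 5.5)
The idea is to split $\mathbb{K}^{n}$ into the kernel of $A$ and an $A$-stable complement. This puts $A$ in block-diagonal form, and we then treat the invertible block with Lemma \ref{27}.

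First we get the decomposition. Since $\chi_{A}(X)=XR(X)$ with $R(0)\neq 0$, the polynomials $X$ and $R$ are coprime. By the kernel lemma, $\mathbb{K}^{n}=\ker(u)\oplus\ker R(u)$, where $u$ is the endomorphism associated with $A$. Here $\ker(u)$ has dimension $n-{\rm rg}(A)=1$, so $\ker R(u)$ has dimension $n-1$. Hence $A$ is similar to $\begin{pmatrix}0&0_{1,n-1}\\0_{n-1,1}&A_{1}\end{pmatrix}$ with $A_{1}\in M_{n-1}(\mathbb{K})$ and $\chi_{A_{1}}=R$. In particular $A_{1}\in GL_{n-1}(\mathbb{K})$. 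Since being similar to a companion matrix is stable under conjugation, it suffices to factor this block matrix. The case $n=1$ is trivial, since then $A=(0)=C(X)\times C(X-1)$. So assume $n\geq 2$.

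Next we factor the invertible block and extend. Since $|\mathbb{K}|\geq 2(n-1)$, Lemma \ref{27} gives $A_{1}=DE$ with $D,E$ invertible and $\pi_{D}=\chi_{D}$, $\pi_{E}=\chi_{E}$. We then set
\[F:=\begin{pmatrix}0&0_{1,n-1}\\0_{n-1,1}&D\end{pmatrix},\qquad G:=\begin{pmatrix}\nu&0_{1,n-1}\\0_{n-1,1}&E\end{pmatrix},\]
so that $FG=\begin{pmatrix}0&0\\0&A_{1}\end{pmatrix}$ for any $\nu\in\mathbb{K}$.

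Finally we check both factors are similar to companion matrices, using the argument from the end of the proof of Lemma \ref{27}. For $F$, we have $\chi_{F}=X\chi_{D}$. Since $D$ is invertible, $0\notin{\rm Sp}(D)$, so $\pi_{F}={\rm lcm}(X,\pi_{D})=X\chi_{D}=\chi_{F}$; no hypothesis on the field is needed here. For $G$, we need $\nu\notin{\rm Sp}(E)$. Then $\pi_{G}={\rm lcm}(X-\nu,\pi_{E})=\chi_{G}$.

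The only delicate point is choosing $\nu$. We need some $\nu\in\mathbb{K}\setminus{\rm Sp}(E)$, and $|{\rm Sp}(E)|\leq n-1<2n-2\leq|\mathbb{K}|$ once $n\geq 2$. Even simpler, $\nu=0$ works because $E$ is invertible, so no cardinality argument is needed at all. The hypothesis $|\mathbb{K}|\geq 2n-2$ is used only to apply Lemma \ref{27} in size $n-1$. Proposition \ref{21} then concludes.
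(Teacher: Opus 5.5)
Your proof is correct, and it follows the same overall plan as the paper: reduce to a $1+(n-1)$ block form with an invertible $(n-1)$-block, factor that block with Lemma \ref{27}, pad the two factors with a $1\times 1$ block, and get $\pi=\chi$ from an lcm argument. The difference is in how you obtain the block form, and it lets you drop a step.

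The paper only takes a kernel vector and completes it to an arbitrary basis. This gives a block upper-triangular matrix $\begin{pmatrix}0&x\\0&C\end{pmatrix}$. To reproduce the row $x$, the paper puts a nonzero scalar $\mu\notin{\rm Sp}(D)$ in the corner of the left factor and $\mu^{-1}x$ in the right factor. This requires a counting argument, $|\mathbb{K}\setminus\{0\}|\geq 2n-3>n-1$, which only works for $n\geq 3$, so the cases $n\leq 2$ are handled separately by Lemma \ref{25}.

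You instead use the kernel lemma ($X$ and $R$ coprime) to get an $A$-stable complement of $\ker A$, so $x=0$. You can then take the trivial paddings ${\rm diag}(0,D)$ and ${\rm diag}(0,E)$. Invertibility of $D$ and $E$ alone gives $\pi=\chi$, no auxiliary scalar is needed, and $n=2$ is covered uniformly.

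The cost is invoking Cayley--Hamilton and the kernel lemma rather than a bare basis completion. The gain is a shorter argument in which, as you note, the field-size hypothesis is used only through Lemma \ref{27}. Your setup also makes visible that the hypothesis ${\rm rg}(A)=n-1$ is redundant: $\chi_A=XR$ with $R(0)\neq 0$ already forces $\dim\ker A=1$.
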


\begin{proof}

If $n=1$ or $n=2$ then the result follows from Lemma \ref{25}. We suppose that $n \geq 3$. Let $u$ be the endomorphism of $\mathbb{K}^{n}$ whose matrix in the canonical basis is $A$. Since ${\rm rg}(A)=n-1$, 0 is an eigenvalue of $u$. Hence, there exists $v \in \mathbb{K}^{n}-\{0\}$ such that $u(v)=0$. We complete the family $(v)$ into a basis $\mathcal{C}:=(v,w_{2},\ldots,w_{n})$ of $\mathbb{K}^{n}$. The matrix of $u$ in $\mathcal{C}$ is similar to $A$ and is equal to:
\[B:=\left (
   \begin{array}{c|c}
      0 & x \\
      \hline
      0_{n-1,1} & C\\ 
   \end{array}
\right),\] 
\noindent with $C:=(c_{i,j}) \in M_{n-1}(\mathbb{K})$ and $x \in M_{1,n-1}(\mathbb{K})$. Since $XR(X)=\chi_{A}(X)=X\chi_{C}(X)$, $C$ is invertible. Moreover, $\left|\mathbb{K}\right| \geq 2n-2=2(n-1)$. By Lemma \ref{27}, there exists $(D,E) \in GL_{n-1}(\mathbb{K})^{2}$ such that $C=DE$, $\pi_{D}=\chi_{D}$ and $\pi_{E}=\chi_{E}$. Besides, there exists $\mu \in \mathbb{K}-\{0\}$ such that $\mu \notin {\rm Sp}(D)$ ($\mu$ exists since $n \geq 3$ and $\left|\mathbb{K}- \{0\}\right| \geq 2n-3>n-1 \geq \left|{\rm Sp}(D)\right|$). We have: 
\[\underbrace{\left (
   \begin{array}{c|c}
      \mu & 0_{1,n-1} \\
      \hline
      0_{n-1,1} & D\\ 
   \end{array}
\right)}_{F}\underbrace{\left (
   \begin{array}{c|c}
      0 & \mu^{-1}x \\
      \hline
      0_{n-1,1} & E\\ 
   \end{array}
\right)}_{G}=\left (
   \begin{array}{c|c}
      0 & x \\
      \hline
      0_{n-1,1} & DE\\ 
   \end{array}
\right)=B.\]
\noindent The matrix $F$ is block-diagonal and $\mu \notin {\rm Sp}(D)$. Hence, $\pi_{F}=\chi_{F}$. The matrix $E$ is invertible. Hence, $0 \notin {\rm Sp}(E)$ and $\pi_{G}=\chi_{G}$ (the argument is the same as the one used at the end of the proof of the previous lemma).

\end{proof}

\begin{cor}
\label{29}

Let $n \geq 2$, $\mathbb{K}$ a commutative field with $\left|\mathbb{K}\right| \geq 2n-3$ and $A \in M_{n}(\mathbb{K})$ verifying ${\rm rg}(A)=n-2$ and $\chi_{A}(X)=X^{2}R(X)$, with $R(0) \neq 0$. The matrix $A$ can be written as a product of two matrices similar to companion matrices.

\end{cor}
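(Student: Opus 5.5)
The plan is to reduce to a block-diagonal form $\mathrm{diag}(0_{2,2},C)$ with $C$ invertible, and then combine Lemma \ref{27} on the invertible block with an explicit cyclic factorization of the $2\times 2$ zero block. No auxiliary scalar $\mu$ should be needed, unlike in Corollary \ref{28}.

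\emph{Step 1: normal form.} Let $u$ be the endomorphism of $\mathbb{K}^{n}$ with matrix $A$. Since $\chi_{A}(X)=X^{2}R(X)$ with $R(0)\neq 0$, $X^{2}$ and $R$ are coprime. The kernel lemma then gives $\mathbb{K}^{n}=\ker(u^{2})\oplus\ker(R(u))$, with both summands $u$-stable, of dimensions $2$ and $n-2$. Hence $A$ is similar to $\mathrm{diag}(N,C)$, where $N\in M_{2}(\mathbb{K})$ is nilpotent and $C\in M_{n-2}(\mathbb{K})$ satisfies $\chi_{C}=R$, so $C$ is invertible. This gives
\[
n-2={\rm rg}(A)={\rm rg}(N)+n-2,
\]
so $N=0_{2,2}$. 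Thus $A$ is similar to $\mathrm{diag}(0_{2,2},C)$. It suffices to factor this matrix, since conjugating both factors by the same invertible matrix preserves the property $\pi=\chi$.

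\emph{Step 2: trivial case $n=2$.} Here $A=0_{2,2}$, and the Remark after Lemma \ref{25} (with all entries zero) gives
\[
0_{2,2}=C(X^{2})\,C(X^{2}).
\]
From now on assume $n\geq 3$.

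\emph{Step 3: factor the invertible block.} Since $|\mathbb{K}|\geq 2n-3\geq 2(n-2)$ and $n-2\geq 1$, Lemma \ref{27} gives $(D,E)\in GL_{n-2}(\mathbb{K})^{2}$ with $C=DE$, $\pi_{D}=\chi_{D}$ and $\pi_{E}=\chi_{E}$.

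\emph{Step 4: assemble the factors.} Let $N_{0}:=C(X^{2})=\begin{pmatrix}0&0\\1&0\end{pmatrix}$, and set $F:=\mathrm{diag}(N_{0},D)$ and $G:=\mathrm{diag}(N_{0},E)$. Since $N_{0}^{2}=0_{2,2}$, we get $FG=\mathrm{diag}(0_{2,2},DE)=\mathrm{diag}(0_{2,2},C)$.

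It remains to check $\pi_{F}=\chi_{F}$, and likewise for $G$. For a block-diagonal matrix, $\pi_{F}=\mathrm{lcm}(\pi_{N_{0}},\pi_{D})=\mathrm{lcm}(X^{2},\chi_{D})$. Because $D$ is invertible, $\chi_{D}(0)\neq 0$, so $X^{2}$ and $\chi_{D}$ are coprime. Hence $\pi_{F}=X^{2}\chi_{D}=\chi_{F}$. The same argument applies to $G$ using the invertibility of $E$. Proposition \ref{21} then concludes.

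\emph{Main obstacle.} The only genuinely nontrivial point is Step 1: one must see that the rank hypothesis forces the nilpotent $2\times 2$ block to vanish, rather than being a nonzero Jordan block. That is exactly what makes the choice $N_{0}^{2}=0$ work. The field-size hypothesis enters only through the appeal to Lemma \ref{27} on a block of size $n-2$.
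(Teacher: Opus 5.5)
Your proof is correct, and it takes a genuinely different route from the paper's.

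The paper only completes a basis $(t,v)$ of $\ker u$. This leaves $A$ in the block upper-triangular form $\begin{pmatrix} 0_{2,2} & L\\ 0_{n-2,2} & C\end{pmatrix}$ with a possibly nonzero coupling block $L$. To absorb $L$, the paper builds factors with three diagonal blocks: $F$ with blocks $\lambda,0,D$ and $G$ with blocks $0,\mu,E$, carrying $L_{2}E^{-1}$ and $\lambda^{-1}L_{1}$ off the diagonal. Then $0$ is a simple eigenvalue of each factor. This forces the choice of auxiliary scalars $\lambda,\mu\neq 0$ with $\lambda\notin{\rm Sp}(D)$ and $\mu\notin{\rm Sp}(E)$.

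You instead take the $u$-stable complement $\ker R(u)$ given by the kernel lemma. You then observe that the rank hypothesis forces $\ker u=\ker u^{2}$, so one may take $L=0$ and $A$ is similar to $\mathrm{diag}(0_{2,2},C)$. The square-zero block $N_{0}=C(X^{2})$ placed in both factors replaces the scalars altogether. Cyclicity is then immediate from the coprimality of $X^{2}$ with $\chi_{D}$ (resp. $\chi_{E}$).

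Your version is shorter and purely block-diagonal. Since the field size enters only through Lemma \ref{27} on the block of size $n-2$, it in fact proves the statement under the weaker hypothesis $|\mathbb{K}|\geq 2(n-2)$. For example, it covers $n=3$ over $\mathbb{F}_{2}$. There $D=(1)$ is forced, no nonzero $\lambda\notin{\rm Sp}(D)$ exists, and the paper's construction breaks down.

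In exchange, the paper's version needs no primary decomposition and works from an arbitrary basis of the kernel. The cost is the slightly stronger bound $|\mathbb{K}|\geq 2n-3$.
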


\begin{proof}

If $n=2$ then $A=0_{2,2}$ and the result follows from Lemma \ref{25}. We suppose that $n \geq 3$. Let $u$ be the endomorphism of $\mathbb{K}^{n}$ whose matrix in the canonical basis is $A$. Since ${\rm rg}(A)=n-2$, there exist $t,v \in \mathbb{K}^{n}$ such that $(t,v)$ is linearly independant and $u(t)=u(v)=0$. We complete the family $(t,v)$ into a basis $\mathcal{C}:=(t,v,w_{3},\ldots,w_{n})$ of $\mathbb{K}^{n}$. The matrix of $u$ in $\mathcal{C}$ is similar to $A$ and is equal to:
\[B:=\left (
   \begin{array}{c|c}
      0_{2,2} & L \\
      \hline
      0_{n-2,2} & C\\ 
   \end{array}
\right),\] 
\noindent with $C:=(c_{i,j}) \in M_{n-2}(\mathbb{K})$ and $L={}^t\!\begin{pmatrix}
   L_{1} & L_{2}
   \end{pmatrix} \in M_{2,n-2}(\mathbb{K})$. Since $X^{2}R(X)=\chi_{A}(X)=X^{2}\chi_{C}(X)$, $C$ is invertible. Besides, $\left|\mathbb{K}\right| \geq 2n-3>2(n-2)$. By Lemma \ref{27}, there exists $(D,E) \in GL_{n-2}(\mathbb{K})^{2}$ such that $C=DE$, $\pi_{D}=\chi_{D}$ and $\pi_{E}=\chi_{E}$. Besides, there exists $(\lambda,\mu) \in (\mathbb{K}-\{0\})^{2}$ such that $\lambda \notin {\rm Sp}(D)$ and $\mu \notin {\rm Sp}(E)$ (since $n \geq 3$ and $\left|\mathbb{K}- \{0\}\right| \geq 2n-4>n-2 \geq \left|{\rm Sp}(D)\right|=\left|{\rm Sp}(E)\right|$). We have: 
\[\underbrace{\left (
   \begin{array}{c|c|c}
      \lambda & 0 & 0_{1,n-2} \\
      \hline
      0 & 0 & L_{2}E^{-1}\\
			\hline
			 0_{n-2,1} & 0_{n-2,1} & D\\
   \end{array}
\right)}_{F}\underbrace{\left (
   \begin{array}{c|c|c}
      0 & 0 & \lambda^{-1}L_{1} \\
      \hline
      0 & \mu & 0_{1,n-2} \\
			\hline
			 0_{n-2,1} & 0_{n-2,1} & E\\
   \end{array}
\right)}_{G}=\left (
   \begin{array}{c|c|c}
      0 & 0 & L_{1} \\
      \hline
      0 & 0 & L_{2} \\
			\hline
			 0_{n-2,1} & 0_{n-2,1} & DE\\
   \end{array}
\right)=B.\]
\noindent The matrices $F$ and $G$ are block-triangular, $D$ and $E$ are invertible, $\lambda \notin {\rm Sp}(D)$ and $\mu \notin {\rm Sp}(E)$. Hence, $\pi_{F}=\chi_{F}$ and $\pi_{G}=\chi_{G}$ (the argument is the same as the one used at the end of the proof of Lemma \ref{27}).

\end{proof}

\subsubsection{The general case for non-invertible matrices}

We need the following classical result:

\begin{thm}[Sourour-Tang, \cite{SK} Theorem 1] 

Let $A$ be an  $n \times n$ singular matrix over an arbitrary commutative field $\mathbb{K}$ and let $\beta_{j}$ and $\gamma_{j}$ ($1 \leq j \leq n$) be elements of $\mathbb{K}$. If $A$ is not a nonzero $2\times 2$ nilpotent matrix, then $A$ can be factored as a product $BC$ where the eigenvalues of $B$ and $C$ are $\beta_{1},\ldots,\beta_{n}$ and $\gamma_{1},\ldots,\gamma_{n}$ respectively, if and only if the number of zeros $m$ among $\beta_{1},\ldots,\beta_{n},\gamma_{1},\ldots,\gamma_{n}$ is not less than the dimension of the null space of $A$. If $A$ is a nonzero $2 \times 2$ nilpotent matrix then $A$ can be factored as above if and only if $1 \leq m \leq 3$.

\end{thm}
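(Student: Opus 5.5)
The plan is to prove necessity by a rank count and sufficiency by induction on $n$, splitting off one dimension at a time in the spirit of the proof of Lemma \ref{27}.

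\emph{Necessity.} Suppose $A=BC$ with the prescribed eigenvalues. Then $\dim\ker A \leq \dim\ker B+\dim\ker C$. The geometric multiplicity of $0$ as an eigenvalue of $B$ (resp.\ $C$) is at most its algebraic multiplicity, which is the number of zeros among the $\beta_{j}$ (resp.\ $\gamma_{j}$). Hence $\dim\ker A\leq m$.

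For a nonzero $2\times 2$ nilpotent $A$ two further facts are needed.
\begin{itemize}
\item $m\geq 1$ holds because $A$ is singular, so $B$ or $C$ is singular.
\item $m=4$ must be excluded. Then $B$ and $C$ are both nilpotent; since $A\neq 0$, both are nonzero, so $B=u\,{}^t v$ and $C=w\,{}^t z$ with ${}^t v u={}^t z w=0$. This gives $BC=({}^t v w)\,u\,{}^t z$. If this product is nonzero and nilpotent, then ${}^t v w\neq 0$ and ${}^t z u=0$. In dimension $2$, the conditions ${}^t zu=0$ and ${}^t vu=0$ force $z$ to be proportional to $v$. Then ${}^t zw=0$ gives ${}^t vw=0$, a contradiction.
\end{itemize}
So $1\leq m\leq 3$ is necessary in that case.

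\emph{Sufficiency.} I would argue by induction on $n$, with the cases $n\leq 2$ done by hand as in Lemma \ref{25}. Since $m\geq \dim\ker A\geq 1$, I may assume (after possibly exchanging the roles of $B$ and $C$ by transposition, $A={}^tC\,{}^tB$) that $\beta_{1}=0$. Choose $v\in\ker A\setminus\{0\}$ and complete it to a basis. Then $A$ is similar to
\[\left(\begin{array}{c|c} 0 & x\\ \hline 0_{n-1,1} & A_{1}\end{array}\right),\]
with $A_{1}\in M_{n-1}(\mathbb{K})$ and $\dim\ker A_{1}\geq \dim\ker A-1$. I then look for $B,C$ in block form mimicking Lemma \ref{27}:
\[B=\left(\begin{array}{c|c} 0 & 0\\ \hline * & B_{1}\end{array}\right),\qquad C=\left(\begin{array}{c|c} \gamma & *\\ \hline 0 & C_{1}\end{array}\right),\]
or the transposed shapes. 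Here $B_{1}C_{1}$ equals $A_{1}$ up to a rank-one correction, exactly as $C-yx$ appeared in Lemma \ref{27}. The remaining eigenvalues $\beta_{2},\ldots,\beta_{n}$ and $\gamma_{j}$, $j\neq j_{0}$, go to the smaller problem, and the zero count decreases by one.

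The choice of basis vector and of the off-diagonal blocks must ensure that the reduced matrix has kernel dimension at most the new number of zeros. When $\dim\ker A<m$, the extra zero should be used to handle the case where $\ker A_{1}$ is too big. When $m=0$ for the reduced problem, the base of the induction is the invertible case: this is the (also classical) Sourour theorem that a non-scalar invertible matrix is a product of two matrices with arbitrary prescribed spectra whose product is $\det A$. I would prove that theorem first by the same bordering technique.

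\emph{Main obstacle.} I expect the hardest step to be the degenerate situations produced by the induction. The reduced matrix may become scalar, where the invertible result fails and one must re-choose $v$ using non-scalarity. It may also become a nonzero $2\times 2$ nilpotent matrix with the wrong number of zeros, where the exceptional clause bites. My first attempt is to exploit the freedom in choosing $v$ (and the rank-one correction $x$) to avoid these configurations whenever $n\geq 3$. Only then would I treat the residual low-dimensional cases by explicit matrices.
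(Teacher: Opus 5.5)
The paper does not prove this statement. It quotes it from Sourour--Tang and uses it as a black box, so your proposal has to stand on its own.

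Your necessity half does stand. From $\ker(BC)=C^{-1}(\ker B)$ you get $\dim\ker A\leq\dim\ker B+\dim\ker C$, and geometric multiplicity is at most algebraic multiplicity. Your rank-one computation also correctly excludes $m=4$ for a nonzero $2\times 2$ nilpotent.

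The gap is sufficiency, which is the entire content of the theorem and which you only outline. Even the ansatz does not fit your normal form. With $v\in\ker A$ as first basis vector, $A$ has a zero first \emph{column} and first row $(0\mid x)$. But $B=\begin{pmatrix}0&0\\ b&B_1\end{pmatrix}$ and $C=\begin{pmatrix}\gamma&c\\ 0&C_1\end{pmatrix}$ give $BC=\begin{pmatrix}0&0\\ \gamma b&bc+B_1C_1\end{pmatrix}$, which has a zero first \emph{row}. Matching forces $x=0$ and $\gamma b=0$, so for $\gamma\neq 0$ you simply get $B_1C_1=A_1$ with no rank-one correction. The shape you want requires a hyperplane containing ${\rm Im}\,A$, that is $A\sim\begin{pmatrix}0&0\\ y&A_1\end{pmatrix}$. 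Then $\gamma b=y$ needs $\gamma=\gamma_{j_0}\neq 0$, and the reduced problem is to factor $A_1-\gamma^{-1}yc$ with $c$ free.

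All the difficulty sits in the choice of $c$ and of the basis, and none of it is argued.

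(i) If $m=2n$ there is no nonzero $\gamma_{j_0}$, even after transposing. A zero corner forces $y=0$, which cannot be arranged when $\ker A\subseteq{\rm Im}\,A$ (for example a nilpotent Jordan block). So the case of two nilpotent factors needs a different construction.

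(ii) The reduced matrix must satisfy $\det(A_1-\gamma^{-1}yc)=\prod_{i\geq 2}\beta_i\prod_{j\neq j_0}\gamma_j$. This means it must be singular when $m\geq 2$, and have exactly this nonzero determinant when $m=1$, where you fall back on the invertible theorem. Since $\det A=0$, nothing enforces this, and you never mention it. For a given basis it can be unattainable: if $y=0$ and $A_1$ is invertible, the determinant is $\det A_1$ whatever $c$ is.

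(iii) You must show that $c$ can be chosen so that, simultaneously, the reduced matrix has nullity at most $m-1$, is not scalar when invertible, and is not a nonzero $2\times 2$ nilpotent when the four remaining prescribed values are all $0$. You state this only as an intention.

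(iv) The base cases $n\leq 2$ with \emph{prescribed} spectra are not covered by Lemma~\ref{25}, which prescribes no spectra. The invertible Sourour theorem, with its own obstruction for scalar matrices, is also deferred.

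As written, sufficiency is a programme rather than a proof.
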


In fact, we are far from needing the full strength of this theorem. However, it is more convenient to use it as is than to reproduce substantial parts of its proof. It allows us to prove the final lemma that we need.

\begin{lem}

Let $n$ be a positive integer and $\mathbb{K}$ be a commutative field verifying $\left|\mathbb{K}\right| \geq n$. Let $A$ be a square matrix of size $n$ whose rank is equal to $n-1$ or $n-2$. The matrix $A$ can be written as a product of two matrices similar to companion matrices. 

\end{lem}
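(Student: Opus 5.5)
The strategy is to apply the Sourour--Tang theorem with prescribed eigenvalues that are pairwise distinct within each factor. A matrix whose $n$ eigenvalues (counted with multiplicity) are pairwise distinct elements of $\mathbb{K}$ has $\chi$ split with simple roots. Hence its minimal polynomial, which is divisible by every $X-\beta_j$, equals its characteristic polynomial. By Proposition \ref{21} such a matrix is then similar to a companion matrix. So it suffices to choose $\beta_1,\ldots,\beta_n$ pairwise distinct and $\gamma_1,\ldots,\gamma_n$ pairwise distinct, with enough zeros among them.

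Since ${\rm rg}(A)\in\{n-1,n-2\}$, $A$ is singular, and $d:=\dim\ker A = n-{\rm rg}(A)\in\{1,2\}$. We need the number $m$ of zeros among the $\beta_j,\gamma_j$ to satisfy $m\geq d$. Within one list we may use $0$ at most once, so $m\le 2$ is forced; we take $m=2$. Concretely, $|\mathbb{K}|\geq n$ allows us to choose $n$ pairwise distinct elements $\beta_1=0,\beta_2,\ldots,\beta_n$ of $\mathbb{K}$. We then set $\gamma_j:=\beta_j$ for all $j$. This gives $m=2\geq d$, so when $A$ is not a nonzero $2\times 2$ nilpotent matrix, the theorem yields $A=BC$ with ${\rm Sp}$ data as prescribed. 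By the remark above, $\pi_B=\chi_B$ and $\pi_C=\chi_C$.

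It remains to handle the exceptional case where $A$ is a nonzero $2\times 2$ nilpotent matrix. Here $m=2$ still satisfies $1\le m\le 3$, so the same choice works (this needs $|\mathbb{K}|\ge 2$, which always holds). Alternatively, Lemma \ref{25} already covers all $n\le 2$.

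The main point of care is the eigenvalue-count condition. The Sourour--Tang theorem should be read with eigenvalues counted with multiplicity, that is, with $\chi_B=\prod(X-\beta_j)$. This is what makes the distinctness argument give $\deg\pi_B=n$. One must also check that $d\le 2$ is exactly what allows distinct eigenvalues in each factor. This is consistent with Lemma \ref{26}, since with $d\ge 3$ one zero per factor would not suffice.
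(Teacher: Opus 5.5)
Your proof is correct and is essentially the paper's argument. The paper also applies Sourour--Tang with both factors having the same $n$ pairwise distinct eigenvalues $0=\lambda_1,\lambda_2,\ldots,\lambda_n$, so that $m=2\geq \dim\ker A$ and $\pi_B=\chi_B$, $\pi_C=\chi_C$. The only difference is that the paper disposes of $n\le 2$ via Lemma \ref{25} instead of checking the nonzero nilpotent $2\times 2$ exception, which you also handle correctly.
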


\begin{proof}

If $n=1$ or $n=2$ then the result follows from Lemma \ref{25}. We suppose now $n \geq 3$. Let $\lambda_{1},\lambda_{2},\ldots\lambda_{n}$ be $n$ pairwise distinct elements of $\mathbb{K}$, with $\lambda_{1}=0$. By Sourour-Tang theorem, there exist $B,C$ such that $A=BC$ and such that ${\rm Sp}(B)={\rm Sp}(C)=\{0,\lambda_{2},\ldots\lambda_{n}\}$. We have $\chi_{B}(X)=\chi_{C}(X)=\prod_{i=1}^{n} (X-\lambda_{i})$ and $\pi_{B}(X)=\pi_{C}(X)=\prod_{i=1}^{n} (X-\lambda_{i})$ since $\lambda_{i} \neq \lambda_{j}$ for $i \neq j$.

\end{proof}

\noindent All the lemmas contained in this section give us Theorem \ref{11}.

\subsection{Some concluding remarks}

The main focus of this note is the study of products of two matrices that are similar to companion matrices. However, in light of the preceding results, one can readily extend the discussion to products of $k$ matrices of this form.

\begin{prop}

Let $n \geq 2$ and $2 \leq k \leq n$. Let $\mathbb{K}$ be a commutative field verifying $\left|\mathbb{K}\right| \geq n$. Let $A$ be a square matrix of size $n$. If ${\rm rg}(A)=n-k$ then $A$ can be written as a product of $k$ matrices similar to companion matrices. 

\end{prop}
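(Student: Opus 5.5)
We argue by induction on $k$, using the Sourour-Tang theorem at each step to split off one factor. Fix $n$ pairwise distinct elements $\lambda_{1}=0,\lambda_{2},\ldots,\lambda_{n}$ of $\mathbb{K}$; this is possible since $\left|\mathbb{K}\right| \geq n$. Any matrix whose eigenvalues (with multiplicity) are $\lambda_{1},\ldots,\lambda_{n}$ has $\chi(X)=\prod_{i=1}^{n}(X-\lambda_{i})$ with simple roots. Hence its minimal polynomial, which is divisible by each $X-\lambda_{i}$, equals its characteristic polynomial, and by Proposition \ref{21} it is similar to a companion matrix. The factors will be built with this spectrum, or with an invertible spectrum such as $\lambda_{2},\ldots,\lambda_{n},\mu$ when we need it.

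The base case $k=2$ is the previous lemma: a matrix of rank $n-2$ is a product of two matrices similar to companion matrices. For the inductive step, let $3 \leq k \leq n$ and ${\rm rg}(A)=n-k$, so $A$ is singular with null space of dimension $k$. We want $A=BC$ with $B$ similar to a companion matrix and ${\rm rg}(C)=n-k+1$. We then apply the induction hypothesis to $C$, since $2 \leq k-1 \leq n$ and ${\rm rg}(C)=n-(k-1)$, to write $C$ as a product of $k-1$ such matrices.

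To produce the factorization we invoke Sourour-Tang. We choose $\beta_{j}=\lambda_{j}$, so $B$ has exactly one zero eigenvalue and is similar to a companion matrix. For $C$ we choose $\gamma_{1}=\cdots=\gamma_{k-1}=0$ together with $n-k+1$ further values. The number of zeros is then $m=1+(k-1)=k$, which is at least $\dim\ker A=k$. Since $n \geq 3$ here, $A$ is not a nonzero $2\times 2$ nilpotent matrix, so the theorem yields $A=BC$.

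The main obstacle is controlling ${\rm rg}(C)$, since Sourour-Tang only prescribes eigenvalues. We know ${\rm rg}(C) \geq {\rm rg}(A)=n-k$. Also, $0$ has algebraic multiplicity $k-1$ for $C$, which forces $\dim\ker C \leq k-1$, that is ${\rm rg}(C) \geq n-k+1$. On the other hand, ${\rm rg}(A) \geq {\rm rg}(B)+{\rm rg}(C)-n$ gives ${\rm rg}(C) \leq n-k+1$, because ${\rm rg}(B)=n-1$. Hence ${\rm rg}(C)=n-k+1$ exactly, and the induction closes. If one prefers to avoid this rank bookkeeping, an alternative is to take $C$ with zero of multiplicity $k-1$ and to check directly that its kernel has the right dimension via Sylvester's inequality as above. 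In either case it remains to verify the bound $\left|\mathbb{K}\right| \geq n$ throughout, which holds since the same field is used at every step.
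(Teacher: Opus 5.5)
Your proof is correct and is essentially the paper's argument: induction on $k$, with Sourour--Tang splitting $A$ into one factor with $n$ distinct eigenvalues (exactly one of them zero) and one factor of rank ${\rm rg}(A)+1$, to which the induction hypothesis applies. You place the companion-like factor on the left rather than the right, and you supply the geometric-multiplicity and Sylvester argument fixing the rank of the other factor, which the paper only asserts; just state explicitly that the $n-k+1$ remaining values $\gamma_j$ are chosen nonzero (e.g. $\lambda_{2},\ldots,\lambda_{n-k+2}$), since your claim that $0$ has algebraic multiplicity exactly $k-1$ for $C$ relies on it.
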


\begin{proof}

If $n=2$ then the result follows from Lemma \ref{25}. We suppose now $n \geq 3$. We proceed by finite induction on $k$. 
\\
\\Suppose that $k=2$. The result follows from the previous lemma. 
\\
\\Suppose that there exists $2 \leq k \leq n-1$ such that every square matrix $B$ of size $n$ whose rank is equal to $n-k$ can be written as a product of $k$ matrices similar to companion matrices. Let $A$ be a square matrix of size $n$ satisfying ${\rm rg}(A)=n-k-1=n-(k+1)$. 
\\
\\Let $\lambda_{1},\lambda_{2},\ldots\lambda_{n}$ be $n$ pairwise distinct elements of $\mathbb{K}$, with $\lambda_{1}=0$. By Sourour-Tang theorem, there exist $B,C$ such that $A=BC$ and such that ${\rm Sp}(B)=\{0,\lambda_{k+1},\ldots,\lambda_{n}\}$, with $0$ an eigenvalue of multiplicity $k$, and ${\rm Sp}(C)=\{0,\lambda_{2},\ldots\lambda_{n}\}$. We have ${\rm rg}(B)=n-k$. Hence, by induction hypothesis, $B$ can be written as a product of $k$ matrices similar to companion matrices. Besides, $\pi_{C}(X)=\chi_{C}(X)=\prod_{i=1}^{n} (X-\lambda_{i})$, since $\lambda_{i} \neq \lambda_{j}$ for $i \neq j$. Hence, by Proposition \ref{21}, $C$ is similar to a companion matrix. Consequently, $A$ can be written as a product of $k+1$ matrices similar to companion matrices. 
\\
\\By induction, the result is proved.

\end{proof}

\begin{rem}
{\rm By Theorem \ref{11}, a square matrix of size $n$, over a field $\mathbb{K}$ satisfying $\left|\mathbb{K}\right| \geq 2n$, whose rank is equal to $n$ or $n-1$ is the product of two matrices similar to companion matrices. Besides, by Proposition \ref{24}, the rank of a companion matrix is equal to $n$ or $n-1$. Hence, by immediate induction, a square matrix of size $n$ whose rank is equal to $n$ or $n-1$ can be written, for every $k \geq 2$, as a product of $k$ matrices similar to companion matrices. If ${\rm rg}(A)=n-h$, with $2 \leq h \leq n$, then, by the previous proposition and by immediate induction, $A$ can be expressed, for every $l \geq h$, as a product of $l$ matrices similar to companion matrices. 
}
\end{rem}

\noindent We conclude this section by the two following remarks:
\begin{itemize}
\item In general, the decomposition $A=BC$ with $B,C$ similar to companion matrices is not unique. For example: \[\begin{pmatrix}
    0 & 0 \\[4pt]
    1 & 1 
   \end{pmatrix}\begin{pmatrix}
    0 & 1 \\[4pt]
    1 & 0 
   \end{pmatrix}=\begin{pmatrix}
    0 & 0 \\[4pt]
    1 & 1 
   \end{pmatrix}=\begin{pmatrix}
    0 & 0 \\[4pt]
    1 & 0 
   \end{pmatrix}\begin{pmatrix}
    1 & 1 \\[4pt]
    0 & 1 
   \end{pmatrix}.\]
	\\
\item If $A=BC$ with $B,C$ similar to companion matrices then $B$ and $C$ don't commute in general. For instance:
\[\begin{pmatrix}
    0 & 0 \\[4pt]
    1 & 1 
   \end{pmatrix}\begin{pmatrix}
    0 & 1 \\[4pt]
    1 & 0 
   \end{pmatrix}=\begin{pmatrix}
    0 & 0 \\[4pt]
    1 & 1 
   \end{pmatrix};~~~~~~\begin{pmatrix}
    0 & 1 \\[4pt]
    1 & 0 
   \end{pmatrix}\begin{pmatrix}
    0 & 0 \\[4pt]
    1 & 1 
   \end{pmatrix}=\begin{pmatrix}
    1 & 1 \\[4pt]
    0 & 0 
   \end{pmatrix}.\]
\end{itemize}

\noindent Note that these two remarks don't use the cardinality of the field.

\section{Improvement of the field cardinality bounds in special cases}
\label{finite}

The aim of this section is to give some partial decomposition results over finite fields smaller than those used in Theorem \ref{11}.
\\
\\\indent First, note that lemmas \ref{25} and \ref{26} are true for all commutative fields. Besides, we can easily decompose invertible diagonalizable matrices (and so extend the result used in the beginning of the proof of Theorem \ref{11}).

\begin{prop}

Let $A$ be an invertible and diagonalizable matrix over an arbitrary commutative field $\mathbb{K}$. The matrix $A$ can be written as a product of two matrices similar to companion matrices. 

\end{prop}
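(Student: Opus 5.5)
The plan is to reduce to a diagonal matrix with grouped eigenvalues and then write down an explicit factorization in which one factor is a single unipotent Jordan block. No cardinality hypothesis on $\mathbb{K}$ is needed, because no auxiliary distinct scalars have to be chosen.

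\emph{Reduction.} Since the property of being a product of two matrices similar to companion matrices is invariant under similarity ($QBCQ^{-1}=(QBQ^{-1})(QCQ^{-1})$), we may replace $A$ by a similar matrix. As $A$ is diagonalizable and invertible, we can assume
\[A=\mathrm{diag}(\lambda_{1}I_{m_{1}},\ldots,\lambda_{r}I_{m_{r}}),\]
where $\lambda_{1},\ldots,\lambda_{r}$ are pairwise distinct and nonzero, and the equal eigenvalues are grouped in consecutive blocks.

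\emph{Construction.} Let $J:=J_{n}(1)$ be the $n\times n$ upper triangular matrix with $1$ on the diagonal and on the superdiagonal, and $0$ elsewhere. It is invertible, and $\chi_{J}=(X-1)^{n}$. Since $J-I_{n}$ is nilpotent of index exactly $n$, we have $\pi_{J}=\chi_{J}$, so $J$ is similar to a companion matrix by Proposition \ref{21}. Set $C:=J$ and $B:=AJ^{-1}$, so that $A=BC$. It remains to show $\pi_{B}=\chi_{B}$.

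\emph{Structure of $J^{-1}$ and $B$.} The inverse $J^{-1}$ is upper triangular, with $1$ on the diagonal and $-1$ on the superdiagonal (its $(i,j)$ entry is $(-1)^{j-i}$ for $j\geq i$). Multiplying on the left by the diagonal matrix $A$ scales the rows. Hence $B$ is upper triangular, and is block upper triangular with respect to the decomposition $n=m_{1}+\cdots+m_{r}$. Its $i$-th diagonal block is $B_{i}:=\lambda_{i}U_{i}$, where $U_{i}$ is an $m_{i}\times m_{i}$ upper unipotent matrix with superdiagonal entries all equal to $-1$.

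\emph{Each diagonal block is cyclic.} The matrix $B_{i}-\lambda_{i}I_{m_{i}}$ is strictly upper triangular with nonzero superdiagonal entries $-\lambda_{i}$. Its $(m_{i}-1)$-th power therefore has a nonzero top-right entry $(-\lambda_{i})^{m_{i}-1}$, so its nilpotency index is exactly $m_{i}$. Thus $\pi_{B_{i}}=\chi_{B_{i}}=(X-\lambda_{i})^{m_{i}}$.

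\emph{Gluing the blocks.} We conclude as at the end of the proof of Lemma \ref{27}. Because $B$ is block upper triangular, $\pi_{B}$ is a multiple of every $\pi_{B_{i}}$: the compressions of a polynomial in $B$ to the diagonal blocks are the same polynomial evaluated at the $B_{i}$. The polynomials $(X-\lambda_{i})^{m_{i}}$ are pairwise coprime, so $\pi_{B}$ is a multiple of their product, which equals $\chi_{B}$. By Cayley--Hamilton, $\pi_{B}=\chi_{B}$, and $B$ is similar to a companion matrix by Proposition \ref{21}.

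The only real care point is the gluing step: the minimal polynomial of a block triangular matrix is divisible by the least common multiple of the minimal polynomials of its diagonal blocks. This is the same argument already used for $F$ and $G$ in Lemma \ref{27}, and it applies here once the diagonal blocks have pairwise distinct eigenvalues. In the special case $r=1$ (a scalar matrix, e.g.\ $A=I_n$ over $\mathbb{F}_{2}$), the argument reduces to $A=(\lambda J^{-1})J$, which is consistent.
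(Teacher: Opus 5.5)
Your proof is correct, and it takes a genuinely different route from the paper's. The paper neither groups equal eigenvalues nor uses a minimal-polynomial argument. It writes $\mathrm{diag}(\lambda_{1},\ldots,\lambda_{n})=DE$, where $D=C(X^{n}-1)$ is the cyclic permutation matrix and $E$ is the weighted cyclic shift $Ee_{1}=\lambda_{1}e_{n}$, $Ee_{j}=\lambda_{j}e_{j-1}$ for $j\geq 2$. It then identifies $E$ with ${}^t C(X^{n}-\lambda_{1}\cdots\lambda_{n})$ by rescaling the basis, i.e.\ conjugating by $\mathrm{diag}(d_{1},\ldots,d_{n})$ with $d_{k}=\lambda_{2}\cdots\lambda_{k}/\lambda_{1}$. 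These are the entries of the paper's matrix $H$, which should be read as a diagonal matrix; as typeset, $HEH^{-1}$ is not the claimed matrix in general, already for $n=2$.

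You instead fix the right factor to be the unipotent block $J_{n}(1)$, so the left factor $AJ^{-1}$ is upper triangular with $\chi_{AJ^{-1}}=\chi_{A}$. You then get cyclicity from the nilpotency index on each eigenvalue block together with the coprime-lcm argument of Lemma \ref{27}. Grouping the eigenvalues and $\lambda_{i}\neq 0$ are exactly what make those diagonal blocks Jordan-type blocks with pairwise coprime minimal polynomials.

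The paper's factorization is shorter and completely explicit, with factors whose characteristic polynomials are $X^{n}-1$ and $X^{n}-\det A$. Yours writes $A$ as the product of a matrix similar to $C(\chi_{A})$ and one similar to $C((X-1)^{n})$, and it stays within the triangular machinery of Section 2. Since only the diagonal blocks of $AJ^{-1}$ matter, and the superdiagonal of $AJ^{-1}$ is $(a_{k,k+1}-a_{k,k})_{k}$, your argument applies verbatim to any upper triangular $A$ with grouped diagonal entries such that $a_{k,k+1}\neq a_{k,k}$ whenever $k$ and $k+1$ lie in the same group.
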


\begin{proof}

There exist a positive integer $n$ and $(\lambda_{1},\lambda_{2},\ldots,\lambda_{n}) \in (\mathbb{K}-\{0\})^{n}$ such that $A$ is similar to the matrix $B:=\begin{pmatrix}
    \lambda_{1} &  & \\[4pt]
     & \ddots & \\
		& & \lambda_{n}
   \end{pmatrix}$. We have $B=\underbrace{\begin{pmatrix}
   0 & 0 & \ldots & 0 & 1  \\
      1 & 0 & \ldots & 0 & 0  \\
		  0 & \ddots & \ddots & \vdots & \vdots  \\
		  \vdots & \ddots & \ddots & 0 & 0  \\
		  0 & \ldots & 0 & 1 &  0 \\
   \end{pmatrix}}_{D}\underbrace{\begin{pmatrix}
   0 & \lambda_{2} & 0 & \ldots &  0  \\
	   \vdots & \ddots & \ddots & \ddots &  \vdots  \\
      \vdots &  & \ddots & \ddots & 0  \\
		  0 & \ldots & \ldots & 0 & \lambda_{n}  \\
		  \lambda_{1} & 0 & \ldots & 0 & 0  \\
   \end{pmatrix}}_{E}$. The matrix $D$ is equal to $C(X^{n}-1)$ and the matrix $E$ is similar to a companion matrix since we have: $HEH^{-1}={}^t\!C(X^{n}-\lambda_{1}\ldots \lambda_{n})$, with $H:=\begin{pmatrix}
   0 & \frac{1}{\lambda_{1}} & 0 & \ldots &  0  \\
	   \vdots & \ddots & \frac{\lambda_{2}}{\lambda_{1}} & \ddots &  \vdots  \\
      \vdots &  & \ddots & \ddots & 0  \\
		  0 & \ldots & \ldots & 0 & \frac{\lambda_{2}\ldots \lambda_{n-1}}{\lambda_{1}}  \\
		  \frac{\lambda_{2}\ldots \lambda_{n}}{\lambda_{1}} & 0 & \ldots & 0 & 0  \\
   \end{pmatrix}$.

\end{proof}

\begin{prop}[\cite{M} Lemma 2.1] 

Let $A:=\begin{pmatrix}
   a_{1,1} & a_{1,2} & a_{1,3} & \ldots & a_{1,n}  \\
      0 & a_{2,2} & a_{2,3} & \ldots & a_{2,n}   \\
		  \vdots & \ddots & \ddots &  & \vdots  \\
		  \vdots &  & \ddots & a_{n-1,n-1} & a_{n-1,n}  \\
		  0 & \ldots & \ldots & 0 &  0 \\
   \end{pmatrix} \in M_{n}(\mathbb{K})$ with $a_{i,i} \neq 0$ and $P \in \mathbb{K}[X]$ a monic polynomial of degree $n$. It exists $(\alpha_{1},\ldots,\alpha_{n}) \in \mathbb{K}^{n}$ such that the characteristic polynomial of the matrix $B=\begin{pmatrix}
   \alpha_{1} & \alpha_{2} & \ldots & \ldots & \alpha_{n}  \\
       a_{1,1} & a_{1,2} & \ldots & \ldots & a_{1,n}  \\
		   0 & a_{2,2} & \ldots & \ldots & a_{2,n}  \\
			 \vdots & \ddots & \ddots &  &  \vdots \\
		  0 & \ldots & 0 & a_{n-1,n-1} & a_{n-1,n}  \\
		   \end{pmatrix}$ is equal to $P$.

\end{prop}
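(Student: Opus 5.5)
The plan is to expand $\chi_{B}(X)={\rm det}(XI_{n}-B)$ along its first row. The key point is that $B$ is upper Hessenberg: its subdiagonal entries are $a_{1,1},\ldots,a_{n-1,n-1}$, all nonzero, and everything below the subdiagonal vanishes. The unknowns $\alpha_{1},\ldots,\alpha_{n}$ appear only in the first row, so $\chi_{B}$ is an affine function of $(\alpha_{1},\ldots,\alpha_{n})$. I will show that the linear part of this function hits every polynomial of degree $<n$.

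First, for $0\le j\le n$, let $q_{j}(X)$ be the characteristic polynomial of the trailing principal submatrix of $B$ on indices $j+1,\ldots,n$, with $q_{n}=1$. This submatrix does not involve the $\alpha$'s, and $q_{j}$ is monic of degree $n-j$.

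Next, compute the cofactor of the entry in position $(1,j)$ of $XI_{n}-B$. Delete row $1$ and column $j$. The remaining matrix is block upper triangular. Its first block, on rows $2,\ldots,j$ and columns $1,\ldots,j-1$, is upper triangular with diagonal entries $-a_{1,1},\ldots,-a_{j-1,j-1}$. Its second block is $XI_{n-j}$ minus the trailing submatrix on indices $j+1,\ldots,n$. So this minor equals $\pm\left(\prod_{i<j}a_{i,i}\right)q_{j}(X)$, with a sign depending only on $j$. Summing over the first row gives
\[\chi_{B}(X)=X\,q_{1}(X)-\sum_{j=1}^{n}\varepsilon_{j}\,c_{j}\,\alpha_{j}\,q_{j}(X),\]
where $\varepsilon_{j}\in\{\pm1\}$ and $c_{j}:=\prod_{i<j}a_{i,i}\neq 0$.

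Finally, solve for the $\alpha_{j}$. The polynomial $P-Xq_{1}$ has degree at most $n-1$, since both $P$ and $Xq_{1}$ are monic of degree $n$. The family $(q_{1},\ldots,q_{n})$ has degrees $n-1,n-2,\ldots,0$ and consists of monic polynomials, so it is a basis of the space of polynomials of degree $<n$. Hence $P-Xq_{1}=\sum_{j}\beta_{j}q_{j}$ for unique scalars $\beta_{j}$. Setting $\alpha_{j}:=-\varepsilon_{j}\beta_{j}c_{j}^{-1}$, which is legitimate because every $a_{i,i}\neq 0$, gives $\chi_{B}=P$. Equivalently, one can determine $\alpha_{1},\alpha_{2},\ldots$ successively by matching coefficients from degree $n-1$ downward, which is a triangular system.

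The main obstacle is the cofactor computation, that is, checking that each minor is exactly $\pm\left(\prod_{i<j}a_{i,i}\right)q_{j}$. This is where the triangular structure of the rows of $A$ and the zero last row matter. I would verify it by writing out the deleted-column matrix explicitly. The signs $\varepsilon_{j}$ do not need to be tracked precisely, because only the nonvanishing of $\varepsilon_{j}c_{j}$ is used.
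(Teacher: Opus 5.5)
Your proof is correct and complete. The paper gives no proof of this statement; it simply quotes it from \cite{M} (Lemma 2.1), so there is no internal argument to compare with. Expanding the upper Hessenberg matrix $XI_n-B$ along its first row and then solving a triangular system in the basis $(q_1,\ldots,q_n)$ is the standard way to establish this fact.

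Two small precisions:
\begin{itemize}
\item The signs can be computed outright. The minor at position $(1,j)$ is $(-1)^{j-1}c_jq_j$, and the entry there is $-\alpha_j$ for $j\geq 2$ (resp.\ $X-\alpha_1$ for $j=1$). Since $(-1)^{1+j}(-1)^{j-1}=1$, all $\varepsilon_j$ equal $1$ and
\[\chi_B(X)=Xq_1(X)-\sum_{j=1}^{n}c_j\,\alpha_j\,q_j(X).\]
\item Contrary to your closing remark, the zero last row of $A$ plays no role, because $B$ only uses the first $n-1$ rows of $A$. What you actually use is twofold. First, these rows are upper triangular, which gives the Hessenberg shape and the vanishing lower-left block of each minor. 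Second, $a_{1,1},\ldots,a_{n-1,n-1}\neq 0$; the hypothesis $a_{i,i}\neq 0$ can only concern these indices, since $a_{n,n}=0$.
\end{itemize}
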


\begin{cor}

Let $n \geq 2$ and $\mathbb{K}$ a finite field. Let $A$ be a triangularizable square matrix of size $n$ satisfying $\chi_{A}(X)=XR(X)$, with $R(0) \neq 0$. The matrix $A$ can be written as a product of two matrices similar to companion matrices. 

\end{cor}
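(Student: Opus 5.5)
The plan is to reduce $A$ to the upper triangular shape of the last proposition (\cite{M} Lemma 2.1), and then factor that shape as a nilpotent shift times a matrix of the form $B$ appearing there.

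\textbf{Step 1 (normal form).} Since $A$ is triangularizable and $\chi_{A}(X)=XR(X)$ with $R(0)\neq 0$, $0$ is a simple eigenvalue of $A$. We triangularize $A$ so that the diagonal entries appear in the order $a_{1,1},\ldots,a_{n-1,n-1},0$, where the $a_{i,i}$ are the roots of $R$, all nonzero. This ordering is possible because any ordering of the eigenvalues can be achieved in a triangularization, by induction on a flag. Call the resulting matrix $T$.

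\textbf{Step 2 (factorization).} Because $T$ is upper triangular with last diagonal entry $0$, its last row is zero. So $T$ has exactly the form of the matrix $A$ in \cite{M} Lemma 2.1, with $a_{i,i}\neq 0$. Let
\[
J:=\sum_{i=1}^{n-1}E_{i,i+1}
\]
be the nilpotent upper shift. For any $(\alpha_{1},\ldots,\alpha_{n})$, let $B$ be the matrix of that lemma, whose first row is $(\alpha_{1},\ldots,\alpha_{n})$ and whose rows $2,\ldots,n$ are the first $n-1$ rows of $T$. Then $JB$ has row $i$ equal to row $i+1$ of $B$ for $i\leq n-1$, and a zero last row. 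Hence $JB=T$.

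\textbf{Step 3 (both factors are nonderogatory).} The matrix $J$ is ${}^t C(X^{n})$, so $\pi_{J}=\chi_{J}=X^{n}$; moreover $J$ is similar to $C(X^{n})$, since a matrix and its transpose are similar. For $B$, I will use \cite{M} Lemma 2.1 to choose the $\alpha_{i}$ so that $\chi_{B}=P$ with $P$ irreducible of degree $n$ over the finite field $\mathbb{K}$; such a $P$ exists over every finite field. Then $\pi_{B}$ is a nonconstant divisor of the irreducible polynomial $\chi_{B}$, hence $\pi_{B}=\chi_{B}$.

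An alternative argument for Step 3 avoids the choice of $P$. The matrix $B$ is upper Hessenberg with nonzero subdiagonal entries $a_{1,1},\ldots,a_{n-1,n-1}$, so $B^{k}e_{1}$ has a nonzero component on $e_{k+1}$ and no components beyond it. Thus $(e_{1},Be_{1},\ldots,B^{n-1}e_{1})$ is a basis, and $\deg \pi_{B}=n$.

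\textbf{Conclusion.} Conjugating back, $A$ is similar to $T=JB$, so $A=(QJQ^{-1})(QBQ^{-1})$ for some invertible $Q$. By Proposition \ref{21}, both factors are similar to companion matrices.

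The main point requiring care is Step 1: we must make sure $0$ can be placed last on the diagonal, so that the last row of $T$ vanishes and the shift factorization applies. The rest is direct.
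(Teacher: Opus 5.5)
Your main argument is the paper's: triangularize $A$ with $0$ in the last diagonal position, write the triangular form as ${}^t\!C(X^{n})$ times the matrix of \cite{M} Lemma 2.1, and choose its first row so that its characteristic polynomial is an irreducible polynomial of degree $n$. Such a polynomial exists because $\mathbb{K}$ is finite. You also justify placing $0$ last, which the paper only asserts.

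Your alternative for Step 3 is correct and takes a genuinely different route.

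\begin{itemize}
\item \textbf{Why it works.} $B$ is upper Hessenberg with nonzero subdiagonal entries $a_{1,1},\ldots,a_{n-1,n-1}$. So $e_{1}$ is a cyclic vector for $B$ whatever the first row $(\alpha_{1},\ldots,\alpha_{n})$ is; you may even take it to be zero. Hence $\pi_{B}=\chi_{B}$ with no need for \cite{M} or for irreducible polynomials.
\item \textbf{What it buys.} The finiteness hypothesis disappears. Your argument proves the corollary for every triangularizable $A$ with $\chi_{A}=XR(X)$, $R(0)\neq 0$, over an arbitrary field.
\item \textbf{What the paper's route needs.} It requires an irreducible polynomial of degree $n$ over $\mathbb{K}$, and that is the only place finiteness is used.
\end{itemize}
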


\begin{proof}

The matrix $A$ is similar to $B:=\begin{pmatrix}
   a_{1,1} & a_{1,2} & a_{1,3} & \ldots & a_{1,n}  \\
      0 & a_{2,2} & a_{2,3} & \ldots & a_{2,n}   \\
		  \vdots & \ddots & \ddots &  & \vdots  \\
		  \vdots &  & \ddots & a_{n-1,n-1} & a_{n-1,n}  \\
		  0 & \ldots & \ldots & 0 &  0 \\
   \end{pmatrix}$, with $a_{i,j} \in \mathbb{K}$ and $a_{i,i} \neq 0$. Since $\mathbb{K}$ is finite, there exists an irreducible monic polynomial $P \in \mathbb{K}[X]$ of degree $n$ (see for instance \cite{G} exercise VII.9). By the previous proposition, there exists $(\alpha_{1},\ldots,\alpha_{n}) \in \mathbb{K}^{n}$ such that the characteristic polynomial of the matrix $D=\begin{pmatrix}
   \alpha_{1} & \alpha_{2} & \ldots & \ldots & \alpha_{n}  \\
       a_{1,1} & a_{1,2} & \ldots & \ldots & a_{1,n}  \\
		   0 & a_{2,2} & \ldots & \ldots & a_{2,n}  \\
			 \vdots & \ddots & \ddots &  &  \vdots \\
		  0 & \ldots & 0 & a_{n-1,n-1} & a_{n-1,n}  \\
		   \end{pmatrix}$ is equal to $P$. In paticular, $\pi_{D}=\chi_{D}$, since $\pi_{D}$ divides $\chi_{D}$ (by Cayley-Hamilton theorem) and $\chi_{D}$ irreducible. We have: $B={}^t\!C(X^{n}) \times D$.

\end{proof}

\begin{cor}

Let $n \geq 3$ and $\mathbb{K}$ a finite field. Let $A$ be a triangularizable square matrix of size $n$ satisfying ${\rm rg}(A)=n-2$ and $\chi_{A}(X)=X^{2}R(X)$, with $R(0)\neq 0$. The matrix $A$ can be written as a product of two matrices similar to companion matrices. 

\end{cor}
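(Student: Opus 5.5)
The plan is to adapt the proof of the previous corollary. First I reduce $A$ to a block-diagonal normal form. Then I factor that form as ${}^t\!C(X^{n})$ times a matrix whose characteristic polynomial I control through the proposition from \cite{M}.

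\textbf{Step 1: normal form.} Let $u$ be the endomorphism of $\mathbb{K}^{n}$ with matrix $A$. Since $\chi_{A}(X)=X^{2}R(X)$ with $R(0)\neq 0$, the generalized eigenspace $\ker u^{2}$ has dimension $2$ and $\mathbb{K}^{n}=\ker u^{2}\oplus {\rm Im}\, u^{2}$, with both summands stable by $u$. Since ${\rm rg}(A)=n-2$, we have $\dim\ker u=2$, hence $\ker u=\ker u^{2}$ and $u$ vanishes on that summand. The restriction of $u$ to ${\rm Im}\, u^{2}$ has characteristic polynomial $R$. It is invertible because $R(0)\neq 0$, and it is triangularizable because $A$ is. Hence $A$ is similar to
\[M:=\left(\begin{array}{c|c} T & 0_{n-2,2}\\ \hline 0_{2,n-2} & 0_{2,2}\end{array}\right),\]
where $T=(a_{i,j})$ is upper triangular of size $n-2$ with $a_{i,i}\neq 0$. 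I expect this step to be the main obstacle: the point is that the rank hypothesis forces the nilpotent part to be exactly $0_{2,2}$ with no coupling to $T$. Everything after it is computation.

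\textbf{Step 2: the factorization.} Put $S:={}^t\!C(X^{n})$, the matrix with ones on the superdiagonal and zeros elsewhere. It is similar to a companion matrix, since $C(X^{n})$ and its transpose are similar (or directly, $\pi_{S}=\chi_{S}=X^{n}$). For any $D\in M_{n}(\mathbb{K})$, the product $SD$ is $D$ with its rows shifted up by one and a zero last row. Since the last row of $M$ is zero, $M=SD$ whenever rows $2,\ldots,n$ of $D$ equal rows $1,\ldots,n-1$ of $M$, and the first row $(\alpha_{1},\ldots,\alpha_{n})$ of $D$ is free. The last row of such a $D$ is row $n-1$ of $M$, which is zero, and its last column below the first entry is zero. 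Hence
\[D=\left(\begin{array}{c|c} D' & \ast\\ \hline 0_{1,n-1} & 0\end{array}\right),\]
where $D'\in M_{n-1}(\mathbb{K})$ has first row $(\alpha_{1},\ldots,\alpha_{n-1})$. Its remaining rows are the rows of $\left(T\,|\,0_{n-2,1}\right)$, which is an upper triangular $(n-2)\times(n-1)$ matrix with nonzero diagonal. So $D'$ has exactly the shape required in the proposition from \cite{M}, in size $n-1\geq 2$.

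\textbf{Step 3: choosing the first row.} Since $\mathbb{K}$ is finite, there is an irreducible monic $P\in\mathbb{K}[X]$ of degree $n-1\geq 2$, and then $P(0)\neq 0$. By the proposition from \cite{M}, I choose $\alpha_{1},\ldots,\alpha_{n-1}$ with $\chi_{D'}=P$, and take $\alpha_{n}$ arbitrary, say $0$. Then $\pi_{D'}=\chi_{D'}$ because $\chi_{D'}$ is irreducible. The matrix $D$ is block upper triangular, so $\chi_{D}=XP$, and $\pi_{D}$ is a multiple of ${\rm lcm}(\pi_{D'},X)=XP$ because $P(0)\neq 0$. With Cayley--Hamilton this gives $\pi_{D}=\chi_{D}$, by the same argument as at the end of the proof of Lemma~\ref{27}.

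\textbf{Conclusion.} We get $M=SD$ with $S$ and $D$ both similar to companion matrices by Proposition~\ref{21}. Conjugating back by the change of basis matrix gives the required decomposition of $A$.
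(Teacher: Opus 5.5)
Your proof is correct and follows the paper's argument: the same factorization ${}^t\!C(X^{n})\times E$ with $E$ block upper triangular, the first row chosen via the proposition from \cite{M} so that the leading $(n-1)\times(n-1)$ block has an irreducible characteristic polynomial of degree $n-1$, and the same lcm argument giving $\pi_{E}=\chi_{E}$. The only difference is in the normal form. You first reduce to the block-diagonal form $T\oplus 0_{2,2}$ via the Fitting decomposition, and you justify it. The paper instead asserts, without justification, a triangular form whose last two rows vanish. So the extra column $x$ in the paper's $E$ is simply zero in your version.
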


\begin{proof}

The matrix $A$ is similar to $B:=\begin{pmatrix}
   a_{1,1} & \ldots & \ldots & a_{1,n-2} & a_{1,n-1} & a_{1,n}  \\
      0 & a_{2,2} & \ldots & a_{2,n-2} & a_{2,n-1} & a_{2,n}   \\
		  \vdots & \ddots & \ddots & &  & \vdots  \\
		  \vdots &  & \ddots & a_{n-2,n-2} & a_{n-2,n-1} & a_{n-2,n}  \\
		  0 & \ldots & \ldots & 0 &  0 & 0 \\
			0 & \ldots & \ldots & 0 &  0 & 0 \\
   \end{pmatrix}$, with $a_{i,j} \in \mathbb{K}$ and $a_{i,i} \neq 0$. Since $\mathbb{K}$ is finite, there exists an irreducible monic polynomial $P \in \mathbb{K}[X]$ of degree $n-1$ (see \cite{G} exercise VII.9). By the previous proposition, there exists $(\alpha_{1},\ldots,\alpha_{n-1}) \in \mathbb{K}^{n-1}$ such that the characteristic polynomial of the matrix $D=\begin{pmatrix}
   \alpha_{1} & \alpha_{2} & \ldots & \ldots & \alpha_{n-1}  \\
       a_{1,1} & a_{1,2} & \ldots & \ldots & a_{1,n-1}  \\
		   0 & a_{2,2} & \ldots & \ldots & a_{2,n-1}  \\
			 \vdots & \ddots & \ddots &  &  \vdots \\
		  0 & \ldots & 0 & a_{n-2,n-2} & a_{n-2,n-1}  \\
		   \end{pmatrix}$ is equal to $P$. In paticular, $\pi_{D}=\chi_{D}$, since $\pi_{D}$ divides $\chi_{D}$ (by Cayley-Hamilton theorem) and $\chi_{D}$ irreducible. We have: 
\[B={}^t\!C(X^{n}) \times\underbrace{\left (
   \begin{array}{c|c}
      D & x \\
      \hline
      0_{n-1,1} & 0\\ 
   \end{array}
\right)}_{E},\]
\noindent with $x={}^t\!\begin{pmatrix}
   0 & a_{1,n} & \ldots & a_{n-2,n} 
		   \end{pmatrix}$. We have $\chi_{E}(X)=XP(X)$. By Cayley-Hamilton theorem, the polynomial $\pi_{E}(X)$ divides $\chi_{E}(X)$. Besides, $\pi_{E}(X)$ is a multiple of the lowest common multiple of $X$ and $\pi_{D}(X)=P(X)$ (since $E$ is block-triangular). Moreover, $0 \notin {\rm Sp}(D)$ (otherwise $0$ would be a root of $P$ and $P$ would be reducible, since $n-1 \geq 2$). So, this lowest common multiple is $XP(X)=\chi_{E}(X)$. Hence, $\pi_{E}$ is equal to $\chi_{E}$ and $A$ can be written as a product of two matrices similar to companion matrices.
\end{proof}

\end{document}